\newcommand{\bburl}[1]{\textcolor{blue}{\url{#1}}}
\newtheorem{thm}{Theorem}[section]
\newtheorem{cor}[thm]{Corollary}
\newtheorem{lem}[thm]{Lemma}
\newtheorem{prop}[thm]{Proposition}
\newtheorem{defi}[thm]{Definition}
\newtheorem{rek}[thm]{Remark}
\DeclareMathOperator{\supp}{supp}
\DeclareMathOperator{\sgn}{sgn}
\numberwithin{equation}{section}
\DeclareFontFamily{U}{mathx}{}
\DeclareFontShape{U}{mathx}{m}{n}{<-> mathx10}{}
\DeclareSymbolFont{mathx}{U}{mathx}{m}{n}
\DeclareMathAccent{\widehat}{0}{mathx}{"70}
\DeclareMathAccent{\widecheck}{0}{mathx}{"71}
\def\BB{{\mathcal{B}}}
\def\YY{\mathbb{Y}}
\def\ZZ{\mathbb{Z}}
\def\XX{\mathbb{X}}
\def\NN{\mathbb{N}}
\def\CC{\mathbf{C}}
\def\DD{\mathcal{D}}
\def\E{\mathcal{E}}
\def\II{\mathcal{I}}
\def\PG{{\mathbf{P}^{con}_{g,\tau}}}
\def\CG{{\mathbf{C}^{con}_{g,\tau}}}
\def\KK{{\mathbf{K}}}
\def\M{{\mathbf{M}}}
\def\FF{{\mathbb{F}}}
\def\RR{{\mathbb{R}}}
\def\CCC{{\mathbb{C}}}
\def\ee{{\bm e}}
\def\ff{{\bm f}}
\def\1{{\mathbf 1}}
\begin{document}

\title{On Consecutive Greedy and other greedy-like type of bases. }

\author{Miguel Berasategui}

\email{\textcolor{blue}{\href{mailto: mberasategui@dm.uba.ar}{mberasategui@dm.uba.ar}}}
\address{IMAS - UBA - CONICET - Pab I, Facultad de Ciencias Exactas
y Naturales, Universidad de Buenos Aires, (1428), Buenos Aires, Argentina}

\author{Pablo M. Bern\'a}

\email{\textcolor{blue}{\href{mailto:pablo.berna@cunef.edu}{pablo.berna@cunef.edu}}}
\address{Departmento de Métodos Cuantitativos, CUNEF Universidad, 28040 Madrid, Spain}

\author{H\`ung Vi\d{\^e}t Chu}

\email{\textcolor{blue}{\href{mailto:hungchu2@illinois.edu}{hungchu2@illinois.edu}}}
\address{Department of Mathematics\\ University of Illinois at Urbana-Champaign, Urbana, IL 61820, USA}

\begin{abstract} 
We continue our study of the Thresholding Greedy Algorithm when we restrict the vectors involved in our approximations so that they either are supported on intervals of $\NN$ or have constant coefficients. We introduce and characterize what we call consecutive greedy bases and provide new characterizations of almost greedy and squeeze symmetric Schauder bases. Moreover, we investigate some cases involving greedy-like properties with constant $1$ and study the related notion of Property (A, $\tau$).
\end{abstract}

\subjclass[2020]{41A65; 46B15}

\keywords{Thresholding Greedy Algorithm; consecutive greedy bases; sequeeze symmetric bases.}

\maketitle

\section{Introduction}
In the field of non-linear approximation, one of the objectives is to find natural algorithms to approximate elements of a given space. Formally, given a (Markushevich) basis $(\ee_n)_n$ of a certain space $\XX$, we associate with each element $x$ in $\mathbb{X}$ the formal series $\sum_n a_n \ee_n$, where $a_n$'s are scalars, called the coefficients of $x$. Examples of such representations include the Taylor expansions or the Fourier series. We then find approximations of $f$ in terms of finite sums 
$\sum_{n\in A(x)}b_n \ee_n,$
for a suitable set $A(x)$ (depending on $x$) and scalars $b_n$ (possibly different from $a_n$). 

For the past twenty years, an algorithm that has attracted much attention is the Thresholding Greedy Algorithm $(G_m)_m$ (TGA), which selects the largest coefficients of $x$ in modulus; that is, if $(a_1,a_2,\ldots)$ is the sequence of the coefficients of  $x\in\XX$, we rearrange the coefficients by their magnitude so that  $\vert a_{\pi(1)}\vert\geqslant \vert a_{\pi(2)}\vert\geqslant\cdots$ and let $G_m(x) = \sum_{n=1}^m a_{\pi(n)}\ee_n$.

We would like to know when the TGA produces the best possible approximation, i.e., when
\begin{eqnarray}\label{int1}
	\Vert x-G_m(x) \Vert\ \approx\ \sigma_m(x), 
\end{eqnarray}
for each $m\in \NN$, where $\sigma_m(x):=\inf_y \Vert x-y\Vert$, and $y$ is any linear combination of basis vectors with $|\supp(y)|\leqslant m$. In \cite{KT1}, the authors introduce the notion of greedy bases as those  satisfying \eqref{int1}. Since then, many authors have explored the TGA and introduced different greedy-type bases, such as almost greedy bases, partially greedy bases, and semi-greedy bases.
Out of these, partially greedy bases are particularly important because they compare the non-linear greedy approximation and the standard linear one. In particular, a basis is partially greedy when 
$$\Vert x-G_m(x)\Vert \ \lesssim\ \Vert x-S_m(x)\Vert,$$
where $S_m(x)$ is the sequence of the linear algorithm formed by the partial sums. Inspired by the contrast between linearity and non-linearity and also following up on the research started in \cite{BBC}, we propose in this paper the following modification of the error $\sigma_m(x)$: we consider the error $\sigma_m^{con}(x):= \inf_y \Vert x-y\Vert$, where $y$ is  generated by basis vectors and supported on an interval of consecutive natural numbers of length $m$. Surprisingly, if we consider bases satisfying $$\Vert x-G_m(x)\Vert\ \lesssim\ \sigma_m^{con}(x),$$
we do not recover the best possible approximation but we achieve convergence based on the concept of almost greedy bases, for which  the TGA produces the best approximation using projections. In the context of this research, we also prove some new equivalences involving symmetry for largest coefficients - also known as Property (A) - and squeeze-symmetry,  two concepts of great interest in the theory of greedy approximation and which have been studied in several papers in the field.

\section{Background and notation}\label{sectionintroduction}

Let $\mathbb{X}$ be a separable, infinite dimensional $p$-Banach space $(0<p\leqslant 1)$ over the field $\mathbb{F} = \{\mathbb{R}, \mathbb{C}\}$. Let $\mathbb{X}^*$ be the dual space of $\mathbb{X}$. By a \textbf{basis}, we mean a sequence $\mathcal{B}= (\ee_n)_{n\in \NN}\subset \mathbb{X}$ such that  
\begin{enumerate}
    \item $\mathbb{X} = \overline{[\ee_n:n\in \NN]}$, where $[\ee_n:n\in \NN]$ denotes the span of $(\ee_n)_{n\in \NN}$;
    \item there is a unique sequence $\BB^*=(\ee_n^*)_{n\in \NN}\subset \mathbb{X}^*$ such that $\ee_j^*(\ee_k) = \delta_{j, k}$ for all $j, k\in\mathbb{N}$;
    \item there exist $c_1, c_2 > 0$ such that 
    $$0 \ <\ c_1 :=\ \inf_n\{\|\ee_n\|, \|\ee_n^*\|\}\ \leqslant\ \sup_n\{\|\ee_n\|, \|\ee_n^*\|\} \ =:\ c_2 \ <\ \infty.$$
\end{enumerate}
If $\mathcal{B}$ also satisfies 
\begin{enumerate}
\item[(4)] $\mathbb{X}^*\ =\ \overline{[\ee_n^*: n\in \mathbb{N}]}^{w^*},$
\end{enumerate}
then $\mathcal{B}$ is a \textbf{Markushevich basis}. Additionally, if the partial sum operators $S_m(x) = \sum_{n=1}^m \ee_n^*(x)\ee_n$ for $m\in \mathbb{N}$ are uniformly bounded, i.e., there exists $\mathbf C > 0$ such that 
\begin{enumerate}
\item[(5)] $\|S_m(x)\|\ \leqslant\ \mathbf C\|x\|, \forall x\in \mathbb{X}, \forall m\in \mathbb{N}$,
\end{enumerate}
then $\BB$ is a \textbf{Schauder basis} (or a $\CC$-Schauder basis, in particular). The minimum $\CC$ for which the above holds is called the \textit{basis constant}, denoted by $\KK_b$. 

If $\BB$ satisfies the stronger condition that the inequality 
$$
\left\|P_A(x)\ :=\ \sum_{n\in A}\ee_n^*(x)\ee_n \right\|\ \leqslant\ \CC\|x\|
$$
holds uniformly for some fixed $\CC>0$ across all $x\in \XX$ and finite $A\subset \NN$, we say that $\BB$ is $\CC$-suppression unconditional. For an unconditional basis, let $\mathbf K_{su} > 0$ be the smallest such that
$$\|x-P_A(x)\|\ \leqslant\ \mathbf K_{su}\|x\|, \forall x\in \mathbb{X}, \forall \mbox{ finite }A\subset \mathbb{N},$$
and $\BB$ is said to be $\mathbf K_{su}$-suppression unconditional.  We should use ``suppression'' both times, or neither. 
In the sequel, $\BB$ will always denote a basis of a $p$-Banach space $\XX$.

For each $x\in \mathbb{X}$, we want an efficient algorithm to extract finite sums from the formal series $\sum_{n=1}^\infty \ee_{n}^*(x)\ee_n$ and see how well these sums approximate our vector $x$. In 1999, Konyagin and Temlyakov \cite{KT1} introduced the Thresholding Greedy Algorithm (TGA) which chooses the largest coefficients of $x$ in modulus, i.e., largest $|e_n^*(x)|$ to include in the sums. In this paper, we will work with a weaker, more general version of the algorithm (WTGA) first studied by Temlyakov \cite{T98}. The WTGA allows more flexibility in forming sums by choosing largest coefficients up to a constant. In particular, given $0<\tau\leqslant 1$ and $x\in \XX$, we say that $A\subset \mathbb{N}$ is a $\tau$-\textbf{greedy set }of $x$ of order $m$ if $|A| = m$ and 
$$\min_{n\in A}|\ee_n^*(x)|\ \geqslant\ \tau \max_{n\notin A}|\ee_n^*(x)|.$$
In the case $\tau=1$, the WTGA coincides with the TGA, and in this case, we call $A$ a greedy set of $x$ of order $m$; the set of all $\tau$-greedy sets of $x$ of order $m$ is denoted by $G(x,m,\tau)$, whereas $G(x)$ denotes the set of all finite greedy sets of $x$. The corresponding sum is $\sum_{n\in A}\ee_n^*(x)\ee_n$. It is worth noting that for $\tau < 1$, the WTGA is more flexible than the TGA; however, the flexibility has a minimal effect on the approximation efficiency (see \cite[Section 1.5]{T2008}.) 

In \cite{KT1}, the authors introduced greedy and quasi-greedy bases and provided a key characterization of the former as being unconditional and democratic. Later, it was proven in \cite{AABW2021} that this characterization also holds for $p$-Banach spaces. 

\begin{defi}\rm A basis $\BB$ is quasi-greedy with constant $\CC>0$ (or $\CC$-quasi-greedy) if 
$$
\|P_{A}(x)\|\ \leqslant\ \CC\|x\|, \forall x\in \XX, \forall m\in \NN, \forall A\in G(x,m,1).
$$
\end{defi}

\begin{defi}\rm A basis $\BB$ is greedy with constant $\CC>0$ (or $\CC$-greedy) if 
\begin{equation}
\|x-P_{A}(x)\|\ \leqslant\ \CC\sigma_{m}(x), \forall x\in \XX, \forall m\in \NN, \forall A\in G(x,m,1), \label{greedy}
\end{equation}
where $\sigma_m(x)$ is the best  $m$-term approximation error defined as 
$$\sigma_m(x)\ =\ \inf_{\substack{y\in \XX\\|\supp(y)|\leqslant m}}\|x-y\|,$$
and $\supp(y):=\{n\in \NN: \ee_n^*(y)\neq0\}$ is the \textit{support} of $y$ (with respect to $\BB$). 
\end{defi}
The above definitions conceptualize efficiency requirements. Specifically, for a quasi-greedy basis, the TGA produces sums that converge to the to-be-approximated vector (\cite[Theorem 4.1]{AABW2021}, \cite{W2000}), which is the minimal requirement for a sensible algorithm; for a greedy basis, the TGA produces essentially the best approximation up to a constant.

For the next definitions, we use the following notation: 
\begin{itemize}
\item $\NN^{<\infty}$ is the set of all finite subsets of $\NN$, and $\NN^{(m)}$ is set of subsets of $\NN$ with cardinality $m$. 
\item $\E:=\{(a_n)_{n\in\NN}\in \FF^{\NN}: |a_n|=1\;\forall n\in\NN\}$ is the set of \textit{signs}.
\item Given $A\in \NN^{<\infty}$ and $\varepsilon\in \E$, we write 
$$
\1_{A}\ :=\ \sum_{n\in A}\ee_n\mbox{ and }\1_{\varepsilon,  A}\ :=\ \sum_{n\in A}\varepsilon_n \ee_n
$$
\item Given $x\in \XX$, we write 
$$
\varepsilon(x)\ :=\ \left(\sgn\left(\ee_n(x)\right)\right)_{n\in \NN}\ \in\ \E,
$$
where $\sgn(0)=1$. 
\end{itemize}

\begin{defi}\rm Let $\CC>0$. A basis $\BB$ is $\CC$-superdemocratic if 
$$\|\1_{\varepsilon,  A}\|\ \leqslant\ \CC \|\1_{\delta, B}\|, \forall A,B\in \NN^{<\infty} \mbox{ with } |A|\leqslant |B|, \forall \varepsilon, \delta \in \E,$$
and it is $\CC$-democratic when the above holds with $\varepsilon_n=\delta_n=1$ for all $n$. 
\end{defi}

\begin{thm}\cite[Corollary 7.3]{AABW2021}, \cite{KT1}\label{KT1}
A basis $\mathcal{B}$ is greedy if and only if
it is unconditional and (super)democratic.
\end{thm}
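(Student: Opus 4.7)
The plan is to prove the two implications separately: the forward direction via a perturbation trick, the reverse via a splitting identity.

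For \emph{greedy $\Rightarrow$ unconditional and superdemocratic}, assume $\BB$ is $\CC$-greedy. To obtain suppression unconditionality, fix $x\in\XX$ of finite support and a finite $A\subset\NN$, set $\alpha:=\max_n|\ee_n^*(x)|$, fix any $\beta>\alpha$, and let $\varepsilon\in\E$ satisfy $\varepsilon_n=\sgn(\ee_n^*(x))$ for $n\in A$ with $\ee_n^*(x)\neq 0$ (and arbitrary otherwise). The vector $z:=x+\beta\1_{\varepsilon,A}$ satisfies $|\ee_n^*(z)|\geq\beta>\alpha\geq|\ee_m^*(z)|$ for $n\in A$, $m\notin A$, so $A\in G(z,|A|,1)$. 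Since $z-P_A(z)=x-P_A(x)$ and $\beta\1_{\varepsilon,A}$ competes for $\sigma_{|A|}(z)$ giving $\sigma_{|A|}(z)\leq\|x\|$, the greedy inequality yields $\|x-P_A(x)\|\leq\CC\|x\|$; density of finitely supported vectors extends this to all of $\XX$. For superdemocracy, given disjoint finite $A,B$ with $|A|\leq|B|$ and signs $\varepsilon,\delta$, consider $x:=\1_{\varepsilon,A}+2\1_{\delta,B}$: then $B\in G(x,|B|,1)$, $x-P_B(x)=\1_{\varepsilon,A}$, and $\1_{\varepsilon,A}$ competes for $\sigma_{|B|}(x)$, giving $\|\1_{\varepsilon,A}\|\leq 2\CC\|\1_{\delta,B}\|$. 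The non-disjoint case reduces to the disjoint one by chaining through an auxiliary $B''\subset\NN\setminus(A\cup B)$ with $|B''|=|B|$.

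For the converse, assume $\BB$ is unconditional and superdemocratic. Given $x$, $A\in G(x,m,1)$, and any $y$ with $|\supp(y)|\leq m$, use the identity
\[
x-P_A(x)\ =\ (x-y)-P_A(x-y)\ +\ (y-P_A(y)).
\]
The $p$-triangle inequality reduces matters to bounding each summand by a constant multiple of $\|x-y\|$. The first is immediate from unconditionality. For the second, set $B:=\supp(y)\setminus A$ and $A_1:=A\setminus\supp(y)$, and note $|B|\leq|A_1|$ because $|A|=m\geq|\supp(y)|$. Writing $y-P_A(y)=P_B(y)=P_B(y-x)+P_B(x)$, unconditionality handles the first piece, while for the second the greedy set hypothesis gives $\max_{n\in B}|\ee_n^*(x)|\leq\min_{n\in A_1}|\ee_n^*(x)|$; the standard ``same-magnitude'' estimate for unconditional bases combined with superdemocracy then yields $\|P_B(x)\|\lesssim\|P_{A_1}(x)\|$, and since $y$ vanishes on $A_1$ this equals $\|P_{A_1}(x-y)\|\lesssim\|x-y\|$. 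Taking the infimum over $y$ gives the greedy inequality.

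The central difficulty is the off-support term $y-P_A(y)$ in the converse: its support $B$ has no a priori relation to $\supp(x-y)$, so one must pivot through $A_1$, exploiting both the counting bound $|B|\leq|A_1|$ and the magnitude domination inherited from the greedy condition. This is precisely where superdemocracy is indispensable. Since unconditionality implies $\|\1_{\varepsilon,F}\|\asymp\|\1_F\|$ uniformly in signs, democratic and superdemocratic coincide under unconditionality, which justifies the parenthetical ``(super)'' in the statement.
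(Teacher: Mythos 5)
The paper does not prove this statement at all: it is quoted as a known result, with the proof delegated to the cited references (Konyagin--Temlyakov and \cite[Corollary 7.3]{AABW2021} for the $p$-Banach setting). Your argument is correct and is essentially the classical proof found in those sources --- the perturbation trick $z=x+\beta\1_{\varepsilon,A}$ for unconditionality and superdemocracy, and the splitting $x-P_A(x)=(x-y)-P_A(x-y)+(y-P_A(y))$ with the pivot through $A_1=A\setminus\supp(y)$ for the converse --- with the only implicit ingredient being the standard fact that an unconditional basis of a $p$-Banach space satisfies the lattice-type estimate $\|P_B(x)\|\lesssim\max_{n\in B}|\ee_n^*(x)|\,\sup_{\varepsilon}\|\1_{\varepsilon,B}\|$, which is available in \cite{AABW2021}.
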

Between quasi-greedy and greedy bases lie almost greedy bases, introduced by Dilworth, Kalton, Kutzarova, and Temlyakov in \cite{DKKT2003}. The idea is to compare the approximation efficiency of sums from TGA against all possibble projections instead of all possible linear combinations as in the definition of greedy bases. As a result, the almost greedy property is weaker and is much more common than the greedy property (\cite[Remark 7.7]{DKK2003}.) Formally,

\begin{defi}\rm A basis $\BB$ is almost greedy with constant $\CC>0$ (or $\CC$-almost greedy) if 
$$
\|x-P_{A}(x)\|\ \leqslant\ \CC\widetilde{\sigma}_{m}(x), \forall x\in \XX, \forall m\in \NN, \forall A\in G(x,m,1), 
$$
where $\widetilde{\sigma}_m(x)$ is the best  $m$-term approximation error by projections, defined as 
\begin{equation}
\widetilde{\sigma}_m(x)\ =\ \inf_{A\in \NN^{(m)}}\|x-P_A(x)\|.\label{almostgreedy}
\end{equation}
\end{defi}
Almost greedy bases have a characterization close to that of greedy bases by substituting quasi-greediness for unconditionality - a characterization proved in \cite{DKKT2003} for Banach spaces and  in \cite{AABW2021} for $p$-Banach spaces. 
\begin{thm}\label{theorem: DKKT3.3}\cite[Proposition 6.6]{AABW2021}, \cite[Theorem 3.3]{DKKT2003}
A basis $\mathcal{B}$ is almost greedy if and only if
it is quasi-greedy and (super)democratic.
\end{thm}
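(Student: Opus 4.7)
My plan is to prove both directions separately, with the forward direction following from judicious test vectors and the reverse direction from an algebraic decomposition combined with envelope-type estimates for quasi-greedy bases in $p$-Banach spaces.

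\textbf{Forward direction.} Assume $\BB$ is $\CC$-almost greedy. For quasi-greediness, fix $x\in\XX$, $m\in\NN$, and $A\in G(x,m,1)$. One verifies that $\widetilde{\sigma}_m(x)\leqslant \|x\|$ (by choosing $A'\in \NN^{(m)}$ supported on a tail where the coefficients are arbitrarily small and using $p$-subadditivity to control the perturbation). Almost-greediness then yields $\|x-P_A(x)\|\leqslant \CC\|x\|$, and the $p$-quasi-triangle inequality gives $\|P_A(x)\|\leqslant (1+\CC^p)^{1/p}\|x\|$. For superdemocracy, first take $A,B\in\NN^{<\infty}$ disjoint with $|A|\leqslant |B|$ and arbitrary $\varepsilon,\delta\in\E$; for $\eta>0$ small set $x:=\1_{\varepsilon,A}+(1+\eta)\1_{\delta,B}$, so $B\in G(x,|B|,1)$, and choosing $F\supseteq A$ with $|F|=|B|$ and $F\cap B=\emptyset$ (possible by infinite-dimensionality) gives $x-P_F(x)=(1+\eta)\1_{\delta,B}$ and hence $\widetilde{\sigma}_{|B|}(x)\leqslant (1+\eta)\|\1_{\delta,B}\|$. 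Almost-greediness applied with $B$ yields $\|\1_{\varepsilon,A}\|=\|x-P_B(x)\|\leqslant \CC(1+\eta)\|\1_{\delta,B}\|$; sending $\eta\to 0^+$ completes the disjoint case. The general case reduces to the disjoint one by splitting $A=(A\setminus B)\cup(A\cap B)$, applying the disjoint case to $A\setminus B$ against $B$, and to $A\cap B$ against a ``ghost'' set $G$ disjoint from $A\cup B$ with $|G|=|A\cap B|$ used as an intermediary; the pieces combine via $p$-subadditivity.

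\textbf{Reverse direction.} Assume $\BB$ is $\mathbf{K}$-quasi-greedy and $\mathbf{D}$-superdemocratic. Fix $x\in\XX$, $m\in\NN$, $A\in G(x,m,1)$, and an arbitrary $B\in\NN^{(m)}$; set $\alpha:=\min_{n\in A}|\ee_n^*(x)|$, $D:=A\setminus B$, $E:=B\setminus A$, so $|D|=|E|$. From the identity $x-P_A(x)=(x-P_B(x))+P_E(x)-P_D(x)$ and $p$-subadditivity,
\begin{equation*}
\|x-P_A(x)\|^p\ \leqslant\ \|x-P_B(x)\|^p+\|P_E(x)\|^p+\|P_D(x)\|^p.
\end{equation*}
Since $|\ee_n^*(x-P_B(x))|\geqslant \alpha$ for $n\in D$ while $|\ee_n^*(x-P_B(x))|\leqslant \alpha$ elsewhere, $D$ is a $1$-greedy set of $y:=x-P_B(x)$, so quasi-greediness gives $\|P_D(x)\|=\|P_D(y)\|\leqslant \mathbf{K}\|x-P_B(x)\|$.

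The main obstacle is bounding $\|P_E(x)\|$. Applying the truncation (lower envelope) property for quasi-greedy bases in $p$-Banach spaces, developed in \cite{AABW2021}, to the vector $y$ yields $\alpha\|\1_{\varepsilon(x),D}\|\leqslant \mathbf{K}_t\|x-P_B(x)\|$ for a constant $\mathbf{K}_t=\mathbf{K}_t(\mathbf{K},p)$. Superdemocracy with $|D|=|E|$ then gives $\alpha\|\1_{\varepsilon(x),E}\|\leqslant \mathbf{K}_t\mathbf{D}\|x-P_B(x)\|$. The complementary upper envelope estimate for quasi-greedy bases—$\|P_E(x)\|\leqslant\mathbf{K}_u\alpha\|\1_{\varepsilon(x),E}\|$, valid because $|\ee_n^*(x)|\leqslant\alpha$ on $E$—then closes the argument, producing a constant $\CC'=\CC'(p,\mathbf{K},\mathbf{D})$ with $\|x-P_A(x)\|\leqslant\CC'\|x-P_B(x)\|$; taking the infimum over $B\in\NN^{(m)}$ establishes almost-greediness. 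The technical core is the careful formulation and derivation of these two envelope estimates in the $p$-Banach setting; everything else is algebraic bookkeeping.
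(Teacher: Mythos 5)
Your proof is correct, but note that the paper never proves this statement: it is quoted directly from \cite[Theorem 3.3]{DKKT2003} and \cite[Proposition 6.6]{AABW2021}, so there is no in-paper argument to compare with, and what you have written is essentially the standard argument from those references. In fact your reverse direction runs parallel to the paper's own proof of Theorem \ref{m1}~iii): the same decomposition $x-P_A(x)=(x-P_B(x))+P_{B\setminus A}(x)-P_{A\setminus B}(x)$, the observation that $A\setminus B$ is a greedy set of $y=x-P_B(x)$ (handled by quasi-greediness), and the chain ``coefficient bound $\to$ superdemocracy $\to$ truncation quasi-greediness'' to control $\|P_{B\setminus A}(x)\|$ --- the paper does this with $B$ an interval, you with an arbitrary set, which is exactly what the almost greedy case needs. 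One small attribution point: the ``upper envelope'' inequality you invoke, $\|P_E(x)\|\leqslant A_p\,\alpha\,\sup_{\delta}\|\1_{\delta,E}\|$, is not specifically a quasi-greedy estimate but a general convexity fact for bases of $p$-Banach spaces (\cite[Corollary 2.3]{AABW2021}); passing from $\sup_{\delta}\|\1_{\delta,E}\|$ to the particular sign $\|\1_{\varepsilon(x),E}\|$ is where quasi-greediness (or, more directly, the superdemocracy you already assume) enters. With that reading every step closes, including the forward direction, where your test vectors $\1_{\varepsilon,A}+(1+\eta)\1_{\delta,B}$ and the tail-set bound $\widetilde{\sigma}_m(x)\leqslant\|x\|$ are the standard devices.
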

Since their introduction, the TGA and the aforementioned bases have been widely studied in the literature, alongside other greedy-like types of bases  (see \cite{AA2016, AA2017, AABW2021, DKK2003, DKKT2003, T2008, W2000}.) Further background about this topic can be found in \cite{AABW2021, T2008, T2011}.

This paper continues our study of the performance of the TGA against approximations restricted to vectors supported on intervals and vectors with constant coefficients: \cite[Theorem 1.7]{BBC} states that if we restrict the approximations in \eqref{almostgreedy} to intervals of $\NN$, we still obtain almost greedy bases. This is an unexpected result since the consecutive restriction depends on the basis order, while the almost greedy property is order-free. Nevertheless, the restriction still characterizes the almost greedy property. In the same paper, motivated by results in \cite{BB2017, BC, DK2019}, we characterized almost greedy bases using approximations in terms of vectors with constant coefficients and limited to intervals of $\NN$ (\cite[Proposition 1.14]{BBC}). 

The goal of this paper is to study similar restrictions of the approximations appeared in the definition of greedy bases. Unlike the case of the almost greedy property, these restrictions do not produce an equivalence of the greedy property. Besides, we study other properties that appear naturally in this context. In particular, Section~\ref{sectionconsecutivegreedy} defines consecutive greedy bases and characterizes them in the classical sense.  Section~\ref{otherchars} gives yet other characterizations of consecutive greedy bases using the so-called Property (A, $\tau$) (introduced in \cite{C3} as a generalization of Property (A), which was first appeared in \cite{AW2006}). There we also characterize consecutive greedy bases with constant $1$. In Section \ref{Atau}, we study Property (A, $\tau$) more in depth. First, we optimize the constant in \cite[Corollary 5.5]{C3}. Second, we answer \cite[Problem 7.3]{C3} in the affirmative  by showing that once we have Property (A, $\tau$) for some $\tau\in (0,1]$, then we have Property (A, $\tau'$) for all $\tau'\in (0,1]$. The proof leverages 
a recent result in \cite{AAB2022}. In Section~\ref{sectionsqueezesymmetric}, we study greedy-like algorithms in which we restrict the approximations on both sides of \eqref{greedy} to vectors of constant coefficients with respect to a basis $\BB$. Finally, in the Appendix, we consider consecutive-like approximations but on the side of the greedy (or greedy-like) sets.

\section{Consecutive greedy bases}\label{sectionconsecutivegreedy}

In this section, we introduce and characterize consecutive greedy bases. We will use the following notation. 
\begin{itemize}
\item  $\II$ denotes the set of all intervals of $\NN$. 
\item  For each $m\in \NN_0$, $\II^{(m)}$ denotes the set of all intervals of $\NN$ of length $m$. 
\item $\tau$ is any real number in $(0,1]$. 
\item For $x\in \mathbb{X}$, let $G^\tau_m(x) = P_{A}(x)$ for some $A\in G(x, m, \tau)$.
\end{itemize}
 We begin with a definition that is inspired by consecutive almost greedy bases from \cite{BBC} but considers the WTGA and approximations by arbitrary vectors rather than projections.

 \begin{defi}\label{definitionconsecutivegreedy}\normalfont
A basis $(\ee_n)_n$ is said to be ($\mathbf C$, $\tau$)-consecutive greedy for some constant $\mathbf C\geqslant 1$ if
$$\|x-G^{\tau}_m(x)\|\ \leqslant\ \mathbf C\sigma^{con}_m(x),\forall x\in\mathbb{X}, \forall m\in\mathbb{N},\forall G_m^{\tau}(x),$$
where$$\sigma^{con}_m \ =\  \inf\left\lbrace \left\|x - \sum_{n\in I}a_n\ee_n\right\|\,:\, a_n\in\mathbb F, I\in \II^{(m)}\right\rbrace.$$
In this case, the least constant is denoted by $\mathbf C^{con}_{g, \tau}$. When $\tau = 1$, we say that $(\ee_n)_n$ is consecutive greedy. 
\end{defi}

In \cite[Corollary 1.8]{BB2017}, the authors proved that if there is a constant $\CC>0$ such that \eqref{greedy} holds when $\sigma_m(x)$ is replaced by 
$$\DD_m(x)\ =\ \inf_{\substack{A\in \NN^{(m)}\\\lambda\in \FF}}\|x-\lambda\1_{A}\|.$$
As we shall see, there is a corresponding characterization that holds for consecutive greedy bases. First, we define a property that is formally weaker than consecutive greediness. 
\begin{defi}\label{definitionpoly}\normalfont
		For some constant $\mathbf C\geqslant 1$, a basis $(\ee_n)_n$ is said to be ($\mathbf C$, $\tau$)-consecutive greedy for polynomials of constant coefficients if
		$$\|x-G^{\tau}_m(x)\|\ \leqslant\ \mathbf C\mathcal D^{con}_m(x),\forall x\in\mathbb{X}, \forall m\in\mathbb{N}, \forall G_m^{\tau}(x),$$
		where $$\mathcal D^{con}_m(x) \ =\  \inf_{\substack{I\in \II^{(m)}\\\lambda\in\mathbb{F}}}\left\|x - \lambda \1_{I}\right\|.$$
		In this case, the least constant is denoted by $\mathbf P^{con}_{g, \tau}$. When $\tau = 1$, we say that $(\ee_n)_n$ is consecutive greedy for polynomials of constant coefficients (CGPCC). 
\end{defi}
To prove the equivalence between consecutive greedy and CGPCC bases, as well as their characterization in terms of well-known properties, we will use the following equivalent variant of quasi-greediness.

\begin{defi}\label{definitiontausuppressionquasigreedy}\normalfont
A basis $(\ee_n)_n$ is said to be $\mathbf C$-suppression $\tau$-quasi-greedy for some constant $\mathbf C\geqslant 1$ if
$$\|x-G^{\tau}_m(x)\|\ \leqslant\ \mathbf C\|x\|,\forall x\in\mathbb{X}, \forall m\in\mathbb{N},\forall G_m^{\tau}(x).$$
In this case, the least constant is denoted by $\mathbf C_{\ell, \tau}$. When $\tau = 1$, we simply say that $(\ee_n)_n$ is suppression quasi-greedy. 
\end{defi}

\begin{rek}\label{r1}\normalfont
By a result of Konyagin and Temlyakov \cite{KT2}, if $(\ee_n)_n$ is $\mathbf C$-suppression $\tau$-quasi-greedy for some $\tau$ and $\mathbf C$, then it is ($\mathbf D$, $\tau'$)-suppression $\tau'$-quasi-greedy for all $\tau'\in (0, 1]$ and for some $\mathbf D$ possibly dependent on $\tau'$. 
\end{rek}

We also need the notion of \textit{truncation quasi-greedy bases}, which appear naturally in the study of the TGA \cite{AABBL2021,AABW2021, DKKT2003}. 

\begin{defi}\normalfont
A basis $(\ee_n)_n$ is said to be $\CC$-truncation quasi-greedy for some constant $\mathbf C\geqslant 1$ if
$$\min_{n\in A}|\ee_n^*(x)|\|1_{\varepsilon(x), A}\|\ \leqslant\ \CC \|x\|, \forall x\in \XX, \forall A\in G(x).$$
The least constant above is denoted by $\mathbf C_{tq}$. 
\end{defi}
\begin{rek}\label{r3}\rm It was proven in \cite[Lemma 2.2]{DKKT2003} and \cite[Theorem 4.13]{AABW2021} that quasi-greedy bases are truncation quasi-greedy. 
\end{rek}

We now state our first main result.

\begin{thm}\label{m1}
Let $\BB$ be a basis. The following hold
	\begin{enumerate}
		\item [i)] If $\BB$ is $(\CG, \tau)$-consecutive greedy, then it is $(\CG)^2 $-superdemocratic and $(\CG, \tau)$-CGPCC.  
		\item [ii)] If $\BB$ is $(\PG, \tau)$-CGPCC, then it is $\PG$-suppression $\tau$-quasi-greedy,  $(\PG)^2$-democratic, and $\PG$-Schauder.
		\item [iii)] If $\BB$ is $\mathbf C_{\ell, \tau}$-suppression $\tau$-quasi-greedy, $\KK_b$-Schauder, $\CC_{tq}$-truncation quasi-greedy, and $\Delta_{sd}$-superdemocratic,  then $\BB$ is ($\mathbf C^{con}_{g, \tau}$, $\tau$)-consecutive greedy with 
		$$\CG\ \leqslant\ \left(\left(1+2\KK_b^p\right)\left(2+\CC_{\ell,\tau}^p\right)+\left(\tau^{-1}A_p\Delta_{sd}\CC_{tq}\right)^{p}\right)^{\frac{1}{p}}.$$
		In the case $p=1$, we also have
$$
\CG\ \leqslant\ \left(1+2\KK_b\right)\left(2+\CC_{\ell,\tau}\right)+2\tau^{-1}\Delta_{sd}\CC_{\ell,1}.
$$		
	\end{enumerate}
\end{thm}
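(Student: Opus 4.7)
The plan is to treat the three parts in order. For part (i), the CGPCC conclusion is immediate because the set of constant-coefficient polynomials supported on intervals of length $m$ is contained in the set of arbitrary polynomials so supported, giving $\sigma^{con}_m(x) \leq \mathcal D^{con}_m(x)$ for every $x$ and $m$. For superdemocracy, given disjoint $A, B$ with $|A| \leq |B|$ and signs $\varepsilon, \delta$, I would apply consecutive greedy to $x = \1_{\varepsilon, A} + (1+\eta)\1_{\delta, B}$ with $B$ positioned as an interval; the $\tau$-greedy set of order $|B|$ is then $B$, yielding $\|\1_{\varepsilon, A}\| = \|x - P_B(x)\| \leq \CG\, \sigma^{con}_{|B|}(x)$. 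When the spread of $A$ is at most $|B|$, taking $y = \1_{\varepsilon, A}$ supported on an interval of length $|B|$ containing $A$ gives $\sigma^{con}_{|B|}(x) \leq (1+\eta)\|\1_{\delta, B}\|$ and hence the bound with factor $\CG$; for $A$ of larger spread, interposing an intermediate interval and iterating the construction would produce the extra $\CG$ factor, resulting in the $(\CG)^2$ constant.

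For part (ii), choosing $\lambda = 0$ in $\mathcal D^{con}_m(x)$ shows $\mathcal D^{con}_m(x) \leq \|x\|$, so CGPCC immediately gives suppression $\tau$-quasi-greedy with the same constant. The democratic estimate is the same perturbation argument as in part (i), now with positive signs and $\mathcal D^{con}$ in place of $\sigma^{con}$. For the Schauder constant, I would apply CGPCC to a specifically engineered vector whose partial sum $S_m$ coincides with $P_A$ for a suitable $\tau$-greedy set $A$, and read off $\|S_m\| \leq \PG$.

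The heart of the theorem is part (iii). My plan is to fix $x \in \XX$, $m \in \NN$, and $A \in G(x, m, \tau)$, and for small $\eta > 0$ choose a near-optimal $y = \sum_{n \in I} a_n \ee_n$ supported on an interval $I \in \II^{(m)}$ with $\|x - y\| \leq (1+\eta)\sigma^{con}_m(x)$; setting $z = x - y$, I would decompose
\begin{equation*}
x - P_A(x) = (x - P_I(x)) + P_{I \setminus A}(x) - P_{A \setminus I}(x)
\end{equation*}
and estimate each piece. The first summand equals $P_{I^c}(z)$; since $P_I = S_{\max I} - S_{(\min I) - 1}$ satisfies $\|P_I\|^p \leq 2\KK_b^p$, I get $\|x - P_I(x)\|^p \leq (1 + 2\KK_b^p)\|z\|^p$. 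The summand $\|P_{A \setminus I}(x)\| = \|P_{A \setminus I}(z)\|$ is controlled by noticing that $A \setminus I$ is a $\tau$-greedy set of $z$ (the $\tau$-greedy condition on $A$ forces $x$'s coefficients on $A$ to dominate those outside $A$), so suppression $\tau$-quasi-greedy supplies the $\CC_{\ell,\tau}$ factor; after a $p$-triangle step, these combine into the $(1 + 2\KK_b^p)(2 + \CC_{\ell,\tau}^p)$ contribution. For the swap summand $\|P_{I \setminus A}(x)\|$, its coefficients have modulus at most $\tau^{-1}\min_{n \in A}|\ee_n^*(x)|$; truncation quasi-greedy gives $\min_{n \in A}|\ee_n^*(x)| \cdot \|\1_{\varepsilon(x), A}\| \leq \CC_{tq}\|x\|$, while superdemocracy lets $\|\1_{\varepsilon(x), I \setminus A}\|$ be replaced by $\Delta_{sd}\|\1_{\varepsilon(x), A}\|$, jointly yielding the $(\tau^{-1} A_p \Delta_{sd}\CC_{tq})^p\|z\|^p$ contribution. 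The $p$-triangle inequality then produces the displayed $p$-Banach constant, and the ordinary triangle inequality in the case $p = 1$ gives the sharper second formula. The main obstacle is the swap piece, since $I \setminus A$ is neither inside the greedy set $A$ nor aligned with the support of $y$: carefully choosing the auxiliary vector on which to invoke truncation quasi-greedy and passing through superdemocracy to compare $\|\1_{\varepsilon(x), I \setminus A}\|$ with $\|\1_{\varepsilon(x), A}\|$ takes the most bookkeeping, and is where the $\tau^{-1} A_p \Delta_{sd}\CC_{tq}$ factor ultimately emerges.
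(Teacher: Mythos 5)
Parts i) and ii) of your plan follow essentially the paper's route (comparison through a far\,-away auxiliary interval for (super)democracy, an engineered vector with a large constant block for the Schauder bound), and the decomposition you use in iii), $x-P_\Lambda(x)=(x-P_I(x))+P_{I\setminus \Lambda}(x)-P_{\Lambda\setminus I}(x)$, is exactly the paper's. Two small cautions there: superdemocracy is required for \emph{all} $A,B$ with $|A|\leqslant|B|$, not only disjoint ones, so the intermediate interval in your iteration should be chosen disjoint from $A\cup B$ (then disjointness of $A$ and $B$ is never needed); and for $\mathcal D^{con}_m$ the competitor must be constant on an interval of length exactly $m$, which constrains the vectors you engineer in ii).

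The genuine gap is in iii), and it comes from invoking the hypotheses on the wrong vector. First, $\Lambda\setminus I$ need \emph{not} be a $\tau$-greedy set of $z=x-y$: the coefficients of $z$ on $I$ are $\ee_n^*(x)-a_n$ with the $a_n$ arbitrary, so they can be arbitrarily large and destroy the dominance you claim; the correct statement (the paper's) is that $\Lambda\setminus I$ is a $\tau$-greedy set of $x-P_I(x)=z-P_I(z)$, whose coefficients on $I$ vanish, whence $\|P_{\Lambda\setminus I}(x)\|^p\leqslant(1+\CC_{\ell,\tau}^p)\|x-P_I(x)\|^p\leqslant(1+\CC_{\ell,\tau}^p)(1+2\KK_b^p)\|z\|^p$, which is how the factor $(1+2\KK_b^p)(2+\CC_{\ell,\tau}^p)$ actually arises. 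Second, and more seriously, your bound for the swap piece terminates in $\CC_{tq}\|x\|$: you apply truncation quasi-greediness to $x$ with the set $\Lambda$ (which, besides, is only a $\tau$-greedy set, while $\CC_{tq}$ is defined for greedy sets), and an estimate of the form $\|x-P_\Lambda(x)\|\lesssim\|z\|+\|x\|$ is vacuous for consecutive greediness --- the whole point is to control everything by the error $\|z\|$. The paper's fix is to work with $z$ itself: since $\Lambda\setminus I\subset I^c$, one has $\max_{n\in I\setminus\Lambda}|\ee_n^*(x)|\leqslant\tau^{-1}\min_{n\in\Lambda\setminus I}|\ee_n^*(z)|$, and taking the (genuine) greedy set $A:=\{n:|\ee_n^*(z)|\geqslant\min_{k\in\Lambda\setminus I}|\ee_k^*(z)|\}$ of $z$, which contains $\Lambda\setminus I$ and hence satisfies $|A|\geqslant|\Lambda\setminus I|=|I\setminus\Lambda|$, superdemocracy together with truncation quasi-greediness applied to $z$ yield $\|P_{I\setminus\Lambda}(x)\|\leqslant\tau^{-1}A_p\Delta_{sd}\CC_{tq}\|z\|$. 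With these two corrections your sketch coincides with the paper's proof and gives the stated constants, including the sharper $p=1$ bound via Lemma~\ref{lemmatqg}.
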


\begin{cor}
Let $(\ee_n)_n$ be a Schauder basis. Then $(\ee_n)_n$ is almost greedy if and only if it is consecutive greedy. 
\end{cor}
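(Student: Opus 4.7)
The plan is to derive both implications as immediate consequences of Theorem~\ref{m1} combined with the Dilworth-Kalton-Kutzarova-Temlyakov characterization of almost greedy bases recalled as Theorem~\ref{theorem: DKKT3.3}. Both notions appearing in the corollary are defined with $\tau=1$, so every part of Theorem~\ref{m1} will be applied at this parameter throughout.

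For the forward implication, I would feed the consecutive greedy hypothesis into part (i) of Theorem~\ref{m1}, which at once yields superdemocracy and the CGPCC property. The CGPCC property is in turn the hypothesis of part (ii), which delivers suppression $1$-quasi-greediness. This is equivalent, up to a routine constant coming from the $p$-triangle inequality, to the standard form of quasi-greediness used in Section~\ref{sectionintroduction}. Combined with superdemocracy, Theorem~\ref{theorem: DKKT3.3} then yields almost greediness. I should note that the Schauder assumption of the corollary is not actually needed for this direction, as part (ii) of Theorem~\ref{m1} already supplies it.

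For the reverse implication, I would decompose the almost greedy hypothesis via Theorem~\ref{theorem: DKKT3.3} into quasi-greediness and superdemocracy. Quasi-greediness immediately implies suppression $1$-quasi-greediness (again by the $p$-triangle inequality) and, by Remark~\ref{r3}, truncation quasi-greediness. Together with the standing Schauder hypothesis, this verifies all four assumptions of Theorem~\ref{m1}(iii) at $\tau=1$, and the conclusion is precisely that $\BB$ is consecutive greedy.

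I do not foresee any substantive obstacle: the proof is bookkeeping once the three parts of Theorem~\ref{m1} are in hand. The only items requiring any care are the elementary translations between $\|P_A(x)\|\leqslant\CC\|x\|$ and $\|x-P_A(x)\|\leqslant\CC'\|x\|$, and between the variants of $\tau$-quasi-greediness for different $\tau$ addressed in Remark~\ref{r1}; neither is a genuine difficulty.
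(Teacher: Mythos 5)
Your proposal is correct and follows essentially the same route as the paper: both directions combine Theorem~\ref{m1} (parts i)--iii)), Remark~\ref{r3}, and the characterization of almost greedy bases in Theorem~\ref{theorem: DKKT3.3}, applied at $\tau=1$. The only additions are bookkeeping remarks (the equivalence of quasi-greedy variants and the observation that the Schauder hypothesis is superfluous in one direction), which do not change the argument.
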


\begin{proof}
If $(\ee_n)_n$ is almost greedy, then Theorem \ref{theorem: DKKT3.3} says that $(\ee_n)_n$ is quasi-greedy and superdemocratic. Now use Remark \ref{r3} and Theorem \ref{m1} item iii). Conversely, if $(\ee_n)_n$ is consecutive greedy, then by Theorem \ref{m1} items i) and ii), $(\ee_n)_n$ is superdemocratic and quasi-greedy. Invoking Theorem \ref{theorem: DKKT3.3} completes the proof. 
\end{proof}

\begin{rek}\rm
Note that any conditional almost greedy Schauder basis is consecutive greedy but not greedy; many examples of such bases can be found in the literature (see \cite{AABBL2021, AK, DKK2003, KT1, T2008, T2011, W2000}, to name a few.)
\end{rek}
Before proving Theorem~\ref{m1}, we need an auxiliary lemma, in which we use the following definitions from \cite{AABW2021}.
\begin{itemize}
\item For $0<p\leqslant 1$, let $A_p:=(2^p-1)^{-\frac{1}{p}}$ and $B_p=(2\kappa)^{\frac{1}{p}}A_p$, where $\kappa=1$ if $\FF=\RR$ and $\kappa=2$ if $\FF=\mathbb{C}$. 
\item For $u>0$, let 
$$
\eta_p(u)\ :=\ \min_{0<t<1}(1-t^p)^{-\frac{1}{p}}(1-(1+A_p^{-1}u^{-1}t)^{-p})^{-\frac{1}{p}}. 
$$
\end{itemize}

\begin{lem}\label{lemmatqg} If $(\ee_n)_{n}$ is $\CC$-suppression quasi-greedy, it is $\CC_p$-truncation quasi-greedy with 
\begin{equation*}
\CC_p\ :=\ \begin{cases}
2\CC & \text{ if } p=1;\\
\CC^2\eta_p(\CC) & \text{ if } 0<p<1.
\end{cases}
\end{equation*}
\end{lem}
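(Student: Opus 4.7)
The plan is to fix $x \in \XX$, a finite greedy set $A \in G(x)$, and set $\alpha := \min_{n \in A}|\ee_n^*(x)|$ and $\varepsilon := \varepsilon(x)$. The goal is to show $\alpha \|\1_{\varepsilon, A}\| \leqslant \CC_p \|x\|$.

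The main auxiliary object is the $\alpha$-capped version of $x$,
\[
z := \alpha \1_{\varepsilon, A} + (x - P_A(x)).
\]
A coefficient-wise check shows that $A$ is a greedy set of $z$: on $A$ we have $|\ee_n^*(z)| = \alpha$, and on $A^c$ we have $|\ee_n^*(z)| = |\ee_n^*(x)| \leqslant \alpha$ since $A \in G(x)$. Because $P_A(z) = \alpha \1_{\varepsilon, A}$ and $z - P_A(z) = x - P_A(x)$, applying $\CC$-suppression quasi-greediness to $z$ and using the $p$-triangle inequality produces
\[
\alpha^p \|\1_{\varepsilon, A}\|^p = \|P_A(z)\|^p \leqslant \|z\|^p + \|z - P_A(z)\|^p \leqslant (1 + \CC^p)\|z\|^p,
\]
so the problem reduces to controlling $\|z\|$ by a multiple of $\|x\|$.

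The principal difficulty is that the direct estimate $\|z\|^p \leqslant \|x - P_A(x)\|^p + \alpha^p\|\1_{\varepsilon, A}\|^p$ is circular, since its right-hand side still contains the quantity we are trying to bound. To break this loop, I would introduce a scaling parameter and consider the one-parameter family
\[
z_t := (x - P_A(x)) + t\,\alpha \1_{\varepsilon, A}, \qquad t > 0,
\]
verifying that $A$ remains a greedy set of $z_t$ (for $t \geqslant 1$ directly, and for suitable $t \in (0,1)$ after replacing $A$ by a convenient superset). Combining $\CC$-suppression quasi-greediness applied to $z_t$ with the bound $\|x - P_A(x)\| \leqslant \CC \|x\|$ coming from suppression quasi-greediness applied to $x$ itself, and invoking the $p$-triangle inequality, yields a one-parameter family of inequalities for $\alpha\|\1_{\varepsilon, A}\|$.

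For $p = 1$ the optimization collapses into a clean algebraic identity, and two applications of quasi-greediness (one to $x$ and one to the chosen $z_t$) combine to give the constant $2\CC$. For $0 < p < 1$ the same scheme carries through but the $p$-triangle inequality now contributes a factor involving $A_p$; the minimization over $t \in (0,1)$ of the resulting expression is precisely the content of the function $\eta_p$ defined before the lemma, and a final application of $\CC$-suppression quasi-greediness gives the stated constant $\CC^2 \eta_p(\CC)$. The hard part is this circularity-breaking optimization, since every direct estimate relates $\alpha\|\1_{\varepsilon,A}\|$ only to auxiliary vectors that themselves carry $\alpha\|\1_{\varepsilon,A}\|$ as a summand; the parametric family $z_t$ together with the $p$-convexity of the norm is what makes a clean closed-form bound possible.
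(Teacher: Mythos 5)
Your reduction of the problem to ``control $\|z\|$ by a multiple of $\|x\|$'' is exactly the boundedness of the truncation operator, which is the substantive content of the lemma, and the device you propose to obtain it does not work. Applying $\CC$-suppression quasi-greediness to $z_t=(x-P_A(x))+t\alpha\1_{\varepsilon,A}$ only relates $\|z_t-P_A(z_t)\|=\|x-P_A(x)\|$ or $\|P_A(z_t)\|=t\alpha\|\1_{\varepsilon,A}\|$ to $\|z_t\|$, and the only upper bound available for $\|z_t\|$ is the $p$-triangle estimate $\|z_t\|^p\leqslant\|x-P_A(x)\|^p+t^p\alpha^p\|\1_{\varepsilon,A}\|^p$. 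The unknown quantity $\alpha^p\|\1_{\varepsilon,A}\|^p$ therefore enters both sides with the same factor $t^p$ (and on the right it is further multiplied by a constant at least $1$), so the resulting ``one-parameter family of inequalities'' is vacuous for every $t$: the parameter scales the circular term homogeneously and no absorption is possible. Passing to a superset of $A$ when $t<1$ only adds extra terms of the form $P_{A'\setminus A}(x)$ and does not change this. Consequently, the claims that for $p=1$ the optimization ``collapses'' to give $2\CC$, and that for $0<p<1$ the minimization over $t$ ``is precisely the content of $\eta_p$'', are assertions of the conclusion rather than consequences of your construction.

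What actually breaks the circularity is an idea of a different kind, absent from your sketch. For $p=1$ and real scalars (the argument of DKKT, Lemma 2.2, which the paper invokes) one uses convexity: ordering $A=\{n_1,\dots,n_k\}$ by decreasing $|\ee_{n_j}^*(x)|$, the truncated vector $z=\alpha\1_{\varepsilon,A}+(x-P_A(x))$ is a convex combination of the vectors $x-P_{\{n_1,\dots,n_{j-1}\}}(x)$, each of norm at most $\CC\|x\|$ because $\{n_1,\dots,n_{j-1}\}$ is a greedy set of $x$; hence $\|z\|\leqslant\CC\|x\|$ and $\alpha\|\1_{\varepsilon,A}\|\leqslant\|z\|+\|x-P_A(x)\|\leqslant 2\CC\|x\|$. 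The paper does not reprove this: it cites BBG2017/DKKT2003 for $p=1$ and contributes a change-of-basis reduction of the complex case to the real one (rotating each $\ee_n$ by a unimodular scalar so that the coefficients of $x$ become nonnegative and working in the real span), thereby avoiding the usual integration over signs. For $0<p<1$, where convexity is unavailable, the paper simply cites AABW2021, Theorem 4.13(i); the argument there perturbs $x$ itself by $t\alpha\1_{\varepsilon,A}$ — the signs agree on $A$, so the moduli add and $A$ remains greedy — which is what makes the circular term appear with a coefficient strictly smaller than the left-hand one and produces the function $\eta_p$. Your family $z_t$, built on $x-P_A(x)$ rather than on $x$, lacks precisely this feature, so the core of the proof is missing.
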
 
\begin{proof}
The case $0<p<1$ follows from \cite[Theorem 4.13 (i)]{AABW2021}.
The case $p = 1$ follows from \cite[Lemma 2.3]{BBG2017}. Below we offer an alternate proof, which involves only a change of basis instead of integration, for the case $p = 1$.
If $\FF=\RR$ the result follows from the proof of \cite[Lemma 2.2]{DKKT2003}. Suppose that $\FF=\CCC$. Fix $x\in \XX$ with finite support, $m\in \NN$, and $A\in G(x,m,1)$. 

Choose signs $(\varepsilon_n)_{n\in \NN}$ so that if $\ff_n^*=\varepsilon_n^{-1}\ee_n^*$, then $\ff_n^*(x)\geqslant 0$ for all $n\in \NN$. Let $\ff_n=\varepsilon_n \ee_n$ for $n\in \NN$, and let $\YY$ be the Banach space over $\RR$ defined as the completion of
$$
\ZZ\ :=\ \left\lbrace \sum_{n=1}^{k} a_n \ff_n: k\in \NN, a_n\in \RR \right\rbrace
$$
with the norm given by the restriction to $\ZZ$ of the norm of $\XX$. For each $z\in \ZZ$, let $\delta(z):=\left(\sgn\left(\ff_n^*(z)\right)\right)_{n}$. 

Clearly, we may assume that $A\subset \supp(x)$. Identify each $\ff_n^*$ with its counterpart in $\ZZ^*$. As $x\in \ZZ$  and $(\ff_n)_{n}$ is also $\CC$-suppression quasi-greedy, applying the result for $\FF=\RR$, we obtain
\begin{align*}
\min_{n\in A}|e_n^*(x)|\|\1_{\varepsilon(x), A}\|_{\XX}&\ =\ \min_{n\in A}|\ee_n^*(x)|\left\| \sum_{n\in A}\frac{\ee_n^*(x)}{|\ee_n^*(x)|}  \ee_n\right\|_{\XX}\\
&\ =\ \min_{n\in A}|\ff_n^*(x)|\left\| \sum_{n\in A}\frac{\delta_n(x)}{\varepsilon_n} \ff_n\right\|_{\ZZ}\ =\ \min_{n\in A}|\ff_n^*(x)|\|\1_{\delta', A}\|_{\ZZ}\\
&\ \leqslant\  2\CC\|x\|_{\ZZ}\ =\ 2\CC\|x\|_{\XX},
\end{align*}
where $\delta'_n = \delta_n(x)/\varepsilon_n \in \RR$ for all $n\in A$.
The result for infinitely supported $x$ follows from a standard density argument.
\end{proof}

\begin{proof}[Proof of Theorem \ref{m1}]
i) We only need to prove the superdemocracy property: Let $I_1\in \mathcal{I}$ be an interval containing $A$ and choose an interval $I_2>I_1\cup B$ with $|I_2|=|B|$. We have 
\begin{align*}
\|\1_{\varepsilon,  A}\|&\ =\ \|(\1_{\varepsilon,  A}+\1_{I_2} + \1_{I_1\backslash A}) -(\1_{I_2} + \1_{I_1\backslash A})  \|\\
&\ \leqslant\ \CG\|(\1_{\varepsilon,  A}+\1_{I_2}+\1_{I_1\backslash A}) - (\1_{\varepsilon,  A}+\1_{I_1\backslash A})\|\\
&\ =\ \CG\|1_{I_2}\|, 
\end{align*}
and 
$$
\|\1_{I_2}\|\ =\ \|(\1_{I_2}+\1_{\delta, B})-\1_{\delta, B}  \|\ \leqslant\ \CG\|\1_{\delta, B}\|. 
$$
Now the result follows by combining the above estimates. \\
ii) Since $\sigma^{con}_m(x)\leqslant\|x\|$ for all $x\in\mathbb{X}$, $(\ee_n)_n$ is $\PG$-suppression $\tau$-quasi-greedy. The proof of democracy is the same as that of superdemocracy in i), but removing the signs. To prove the Schauder condition, pick $x\in \mathbb X\setminus \{0\}$ with finite support and $m\in \NN$. Let $I:=\{1,\dots,m\}$ and $n:=\max(\supp(x))$. If $n\leqslant m$, then $P_{I}(x)=x$ and there is nothing to prove. Otherwise, let $I_2:=\{m+1,\dots, n\}$ and choose $\alpha>0$ sufficiently large so that if $y=x+\alpha \1_{I_2}$, then $I_2\in G(y,|I_2|,\tau)$. We have
$$
\|P_I(x)\|\ =\ \|y-P_{I_2}(y)\|\ \leqslant\ \PG\|y - \alpha \1_{I_2}\|\ =\ \PG\|x\|.
$$
iii) Let $x\in\mathbb{X}$, $m\in\mathbb{N}$, $\Lambda\in G(x, m, \tau)$, and $I\in\mathcal{I}^{(m)}$. Given scalars $(a_n)_{n\in I}$, set $y:=x- \sum_{n\in I}a_ne_n$. We have
\begin{equation}
\|x-P_{\Lambda}(x)\|^p\ \leqslant\ \|x-P_{I}(x)\|^p + \|P_{I\backslash \Lambda}(x)\|^p + \|P_{\Lambda\backslash I}(x)\|^p.\label{e1}
\end{equation}
On the one hand, since $(\ee_n)_n$ is Schauder, 
\begin{equation}\label{e2}\|x - P_{I}(x)\|^p\ =\ \|y - P_{I}(y)\|^p\ \leqslant\ (1+2\mathbf K_b^p )\left\|y\right\|^p.\end{equation}
On the other hand, since $\Lambda\backslash I$ is a $\tau$-weak greedy set of $x-P_I(x)$, we know that 
\begin{equation}\label{e3}\|P_{\Lambda\backslash I}(x)\|^p\ =\ \|P_{\Lambda\backslash I}(x-P_I(x))\|^p\ \leqslant\ (\mathbf C_{\ell, \tau}^p + 1)\|x-P_I(x)\|^p. 
\end{equation}
Finally, let $A= \{n: |\ee_n^*(y)|\geqslant \min_{n\in\Lambda\setminus I}|\ee_n^*(y)|\}$. We have
\begin{align}\label{e50}
    \|P_{I\backslash \Lambda}(x)\|&\ \leqslant\ A_p \max_{n\in I\backslash \Lambda}|\ee_n^*(x)|\sup_{\delta}\|\1_{\delta, (I\backslash \Lambda)}\|\ \leqslant\ \ A_p \tau^{-1}\min_{n\in \Lambda \backslash I}|e_n^*(y)|\sup_{\delta}\|\1_{\delta, (I\backslash \Lambda)}\|\nonumber \\
    &\ \leqslant\ \tau^{-1}A_p\Delta_{sd}\min_{n\in \Lambda \backslash I}|\ee_n^*(y)|\|\1_{\varepsilon(y), A} \|=\tau^{-1}A_p\Delta_{sd}\min_{n\in A}|\ee_n^*(y)|\|\1_{\varepsilon(y), A} \|\nonumber \\
    &\  \leqslant\ \tau^{-1}A_p \Delta_{sd}\CC_{tq}\|y\|\text{ by Lemma }\ref{lemmatqg}.
\end{align}
From \eqref{e1}, \eqref{e2}, \eqref{e3}, and \eqref{e50}, we obtain
$$
\|x-P_{\Lambda}(x)\|\ \leqslant\ \left(\left(1+2\KK_b^p\right)\left(2+\CC_{\ell,\tau}^p\right)+\left(\tau^{-1}A_p\Delta_{sd}\CC_{tq}\right)^{p}\right)^{\frac{1}{p}}\|y\|. 
$$
Hence, if $p=1$, by Lemma~\ref{lemmatqg}, we obtain 
\begin{equation*}
\|x-P_{\Lambda}(x)\|\ \leqslant\ \left(\left(1+2\KK_b\right)\left(2+\CC_{\ell,\tau}\right)+2\tau^{-1}\Delta_{sd}\CC_{\ell,1}\right)\|y\|.
\end{equation*} 
Since $I\in\mathcal{I}$ and $(a_n)_{n\in I}$ are arbitrary, we are done. 
\end{proof}


\begin{rek}\label{r2}\normalfont
Theorem \ref{m1} and Remark \ref{r1} imply that if $\BB$ is ($\mathbf C$, $\tau$)-consecutive greedy for some $\tau\in (0,1]$ and some $\mathbf C\geqslant 1$, then $(\ee_n)_n$ is ($\mathbf D$, $\tau'$)-consecutive greedy for all $\tau'\in (0,1]$ and some $\mathbf D$ possibly dependent on $\tau'$.
\end{rek}

\section{Characterizations of consecutive greedy bases involving Property (A, $\tau$) and $1$-consecutive greedy bases}\label{otherchars}

In addition to the classical charaterizations of Theorems~\ref{KT1} and~\ref{theorem: DKKT3.3}, greedy and almost greedy bases can be characterized as those that have Property (A) and are unconditional or quasi-greedy, respectively (see \cite[Theorem 2]{DKOSZ} and \cite[Theorems 4.1 and 4.3]{BDKOW2019}). In a similar manner, we will use an extension of Property (A) introduced in \cite{C3} to give another characterization of consecutive greedy bases. For a collection of sets $(A_i)_{i\in I}$, write $\sqcup_{i\in I} A_i$ to mean that the $A_i$'s are pairwise disjoint.  

\begin{defi}\label{defatau}\normalfont
A basis $(\ee_n)_n$ is said to have Property (A, $\tau$) if there exists a constant $\mathbf C = \mathbf C(\tau)\geqslant 1$ such that
$$\|\tau x+ \1_{\varepsilon,  A}\|\ \leqslant\ \mathbf C\|x+ \1_{\delta, B}\|,$$
for all $x\in\mathbb{X}$ with $\|x\|_\infty\leqslant 1/\tau$, for all finite sets $A, B\subset\mathbb{N}$ with $|A|\leqslant |B|$ and $\supp(x)\sqcup A\sqcup B$, and for all signs $\varepsilon, \delta$. When $\tau = 1$, we say that $(\ee_n)_n$ has Property (A). Furthermore, if the constant $\mathbf C$ is uniform across all $\tau\in (0,1]$, we say that $(\ee_n)_n$ has the uniform Property (A). 
\end{defi}

We state a result proven for Banach space, but the result can be easily carried over to $p$-Banach spaces.

\begin{prop}\label{ps}[Restatement of Proposition 5.4 in \cite{C3} for $p$-Banach spaces]
A quasi-greedy basis that has Property (A, $\tau$) for some $\tau\in (0,1]$ has the uniform Property (A).
\end{prop}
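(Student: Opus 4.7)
The plan is to derive two $\tau$-free ingredients from the hypotheses and combine them via a truncation-and-projection argument. Write $\mathbf C_0$ for the constant in the assumed Property (A, $\tau_0$), $\mathbf C_{qg}$ for the quasi-greedy constant, and $\mathbf C_{tq}$ for the associated truncation quasi-greedy constant (which exists by Remark~\ref{r3}). Since none of these depends on $\tau$, any bound for a fixed $\tau\in(0,1]$ that is produced using only these constants together with the $p$-triangle inequality will automatically carry a constant uniform in $\tau$.

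The first ingredient is \emph{superdemocracy}: plugging $x=0$ into Property (A, $\tau_0$) immediately yields $\|\1_{\varepsilon,A}\|\leqslant \mathbf C_0\|\1_{\delta,B}\|$ for all disjoint $A,B$ with $|A|\leqslant|B|$ and any signs. The second ingredient is the \emph{boundedness of the level-one truncation operator}
$$T_1(z)\ :=\ \sum_n \min(1,|\ee_n^*(z)|)\,\sgn(\ee_n^*(z))\,\ee_n.$$
To bound $\|T_1(z)\|$, I would observe the identity $z-T_1(z)=P_C(z)-\1_{\varepsilon(z),C}$, where $C:=\{n:|\ee_n^*(z)|>1\}$ is a greedy set of $z$, and then estimate $\|P_C(z)\|$ by quasi-greediness and $\|\1_{\varepsilon(z),C}\|$ by truncation quasi-greediness (exploiting that $1<\min_{n\in C}|\ee_n^*(z)|$). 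A $p$-triangle inequality on $T_1(z)=z-(z-T_1(z))$ then yields $\|T_1(z)\|\leqslant \mathbf C_{tr}\|z\|$ with $\mathbf C_{tr}$ depending only on $p,\mathbf C_{qg},\mathbf C_{tq}$.

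With these two facts in hand, fix $\tau\in(0,1]$ together with $x,A,B,\varepsilon,\delta$ satisfying the hypotheses of Property (A, $\tau$), and set $z:=x+\1_{\delta,B}$. Because $\|x\|_\infty\leqslant 1/\tau$ may be very large, $B$ need not be a greedy set of $z$ itself; however $T_1$ preserves coefficients of modulus at most $1$, so $T_1(z)=T_1(x)+\1_{\delta,B}$, and now $B$ \emph{is} a greedy set of $T_1(z)$, since all of its coefficients have modulus at most $1$ with equality on $B$. Applying truncation quasi-greediness to $T_1(z)$ at the greedy set $B$ and then the truncation bound gives $\|\1_{\delta,B}\|\leqslant \mathbf C_{tq}\|T_1(z)\|\leqslant \mathbf C_{tq}\mathbf C_{tr}\|z\|$. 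From here, the $p$-triangle inequality applied to $x=z-\1_{\delta,B}$ produces $\|x\|^p\leqslant (1+(\mathbf C_{tq}\mathbf C_{tr})^p)\|z\|^p$, while superdemocracy yields $\|\1_{\varepsilon,A}\|\leqslant \mathbf C_0\mathbf C_{tq}\mathbf C_{tr}\|z\|$. Since $\tau\leqslant 1$, a final $p$-triangle step
$$\|\tau x+\1_{\varepsilon,A}\|^p\ \leqslant\ \tau^p\|x\|^p+\|\1_{\varepsilon,A}\|^p$$
delivers $\|\tau x+\1_{\varepsilon,A}\|\leqslant \mathbf C\|x+\1_{\delta,B}\|$ with $\mathbf C$ depending only on $p,\mathbf C_0,\mathbf C_{qg},\mathbf C_{tq}$, hence independent of $\tau$.

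The main obstacle is the truncation bound; its proof is short but requires carefully isolating the greedy projection $P_C(z)$ from the rescaled characteristic $\1_{\varepsilon(z),C}$ before invoking the correct estimates. The conceptual subtlety is that $B$ fails to be a greedy set of $z$ whenever $\|x\|_\infty>1$, and applying $\tau$-truncation quasi-greediness directly would introduce a constant depending on $\tau$; capping coefficients at level one via $T_1$ is exactly the device that restores $B$'s status as a greedy set, and is therefore the key to obtaining a bound uniform in $\tau$.
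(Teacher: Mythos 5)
Your argument is correct in substance, and it takes a genuinely different route from the paper, which offers no proof here at all: it simply cites Proposition 5.4 of \cite{C3} (proved there for Banach spaces) and asserts that the argument transfers to $p$-Banach spaces. Your proof is self-contained and runs directly in the $p$-Banach setting. Moreover, you use Property (A,$\tau_0$) only through the disjoint superdemocracy it yields at $x=0$, and quasi-greediness only through the boundedness of the level-one truncation operator $T_1$ (which is the standard fact, essentially contained in \cite{AABW2021}, that truncation operators are uniformly bounded for quasi-greedy, hence truncation quasi-greedy, bases). Consequently you actually prove something slightly stronger: quasi-greedy plus superdemocratic gives $\|\tau x+\1_{\varepsilon,A}\|\leqslant \mathbf D\|x+\1_{\delta,B}\|$ with $\mathbf D$ independent of $\tau$ and with no restriction on $\|x\|_{\infty}$, which is exactly the unrestricted form of the uniform Property (A) recorded in Lemma~\ref{lemmacaracunif}. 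The key device --- capping the coefficients at level one so that $B$ becomes a greedy set of $T_1(z)$ and the truncation quasi-greedy inequality applies with minimum coefficient equal to $1$ --- is precisely what removes the $\tau$-dependence, and all the individual estimates (the identity $z-T_1(z)=P_C(z)-\1_{\varepsilon(z),C}$, the bound $\|\1_{\delta,B}\|\leqslant \CC_{tq}\|T_1(z)\|$, and the final $p$-triangle steps) check out.

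One point you should make explicit: writing $z-T_1(z)=P_C(z)-\1_{\varepsilon(z),C}$ with $C=\{n:|\ee_n^*(z)|>1\}$ requires $C$ to be finite, which is not guaranteed by the definition of a basis alone. For quasi-greedy bases the coefficient sequence of every vector is null (this is part of the convergence theory of the TGA in \cite{AABW2021}), so $C$ is indeed finite; alternatively, one can first reduce to finitely supported $x$ with support disjoint from $A\cup B$ via the density-and-perturbation argument the paper uses elsewhere, since both sides of the target inequality are continuous in $x$. With that remark added, your proof is complete.
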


\begin{cor}\label{c2} Let $(\ee_n)_n$ be a basis of a $p$-Banach space. 
The following are equivalent
\begin{enumerate}
    \item [i)] $(\ee_n)_n$ is consecutive greedy.
    \item [ii)] $(\ee_n)_n$ is quasi-greedy, Schauder, and superdemocratic.
     \item [iii)] $(\ee_n)_n$ is both quasi-greedy and Schauder and has the uniform Property (A).
    \item [iv)] $(\ee_n)_n$ is both Schauder and quasi-greedy and has Property (A, $\tau$) for (some) all $\tau\in (0,1]$.
\end{enumerate}
\end{cor}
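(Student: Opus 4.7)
The plan is to close the loop i) $\Rightarrow$ ii) $\Rightarrow$ iv) $\Rightarrow$ iii) $\Rightarrow$ iv) $\Rightarrow$ ii) $\Rightarrow$ i) by stitching together results already available in the paper, so the corollary becomes essentially a repackaging of Theorem \ref{m1}, Theorem \ref{theorem: DKKT3.3}, and Proposition \ref{ps}. No fresh quantitative work should be required.

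For i) $\Leftrightarrow$ ii), I would appeal to Theorem \ref{m1}. The forward direction i) $\Rightarrow$ ii) follows by reading off Theorem \ref{m1} i) (superdemocracy) and then applying Theorem \ref{m1} ii) to the CGPCC consequence of i), which delivers the Schauder condition and the $\mathbf C$-suppression $1$-quasi-greedy property (i.e.\ quasi-greediness in the usual sense, up to constants). The reverse ii) $\Rightarrow$ i) uses Theorem \ref{m1} iii); the only hypothesis of that part not explicitly present in ii) is truncation quasi-greediness, which is automatic from Remark \ref{r3}.

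For the equivalence with (A, $\tau$)-type conditions, I would argue as follows. From ii), Theorem \ref{theorem: DKKT3.3} gives that $\BB$ is almost greedy, and by the characterization of almost greedy bases as quasi-greedy bases with Property (A) (\cite[Theorem 4.3]{BDKOW2019}, referenced at the start of the section), $\BB$ has Property (A)$=$(A, $1$); Proposition \ref{ps} then upgrades this, via quasi-greediness, to the uniform Property (A), establishing both iii) and iv) at once. The implication iii) $\Rightarrow$ iv) is immediate from the definitions, and iv) $\Rightarrow$ iii) is a direct invocation of Proposition \ref{ps}. To close back to ii), I would set $x=0$ in Definition \ref{defatau}: Property (A, $\tau$) then specializes to
\[
\|\tau\1_{\varepsilon, A}\|\ \leqslant\ \mathbf C\|\1_{\delta, B}\|
\]
for every pair $|A|\leqslant |B|$ of disjoint finite sets and all signs, which is superdemocracy with constant $\tau^{-1}\mathbf C$; combined with the Schauder and quasi-greedy hypotheses of iv), this gives ii).

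There is no genuine obstacle beyond bookkeeping: the challenging constants have already been extracted in Theorem \ref{m1}, and the passage between Property (A) and Property (A, $\tau$) is handled uniformly by Proposition \ref{ps}. The one point worth being careful about is that in ii) $\Rightarrow$ iv) I genuinely need the quasi-greedy hypothesis to apply Proposition \ref{ps}; without it, Property (A, $1$) need not spread to all $\tau\in(0,1]$, but since quasi-greediness is already part of ii), this presents no issue.
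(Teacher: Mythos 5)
Your proposal is correct and follows essentially the same route as the paper: i) $\Leftrightarrow$ ii) via Theorem \ref{m1} (with Remark \ref{r3} supplying truncation quasi-greediness), ii) $\Rightarrow$ uniform Property (A) via almost greediness (Theorem \ref{theorem: DKKT3.3}), Property (A), and Proposition \ref{ps}, with iii) $\Leftrightarrow$ iv) handled exactly as in the paper. The only cosmetic difference is the closing step back to ii): the paper cites that Property (A) implies superdemocracy, whereas you set $x=0$ in Definition \ref{defatau}, which (note it actually gives constant $\mathbf C$ rather than $\tau^{-1}\mathbf C$, since the $\tau$ multiplies $x$) yields superdemocracy only for \emph{disjoint} $A,B$ and so needs the standard auxiliary-set comparison to cover overlapping sets --- a trivial bookkeeping fix.
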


\begin{proof}

That i) $\Longleftrightarrow$ ii) is due to Theorem \ref{m1} and iii) $\Longrightarrow$ iv) is immediate. 
Moreover, Proposition \ref{ps} gives that iv) $\Longrightarrow$ iii).

To see that ii) $\Longrightarrow$ iii), we first use Theorem~\ref{theorem: DKKT3.3} to claim that a basis satisfying ii) is almost greedy. By \cite[Theorem 2]{DKOSZ}, the basis has Property (A, $1$). Now we use Proposition \ref{ps} to claim that the basis has the uniform Property (A). 

Finally, iii) $\Longrightarrow$ ii) is clear because Property (A) implies superdemocracy, see for example \cite[Proposition 3.8]{BDKOW2019}. 
\end{proof}

In the case of Banach spaces, $1$-greedy, $1$-almost greedy, and $1$-quasi-greedy bases have been studied in the literature \cite{AA2016, AA2017, AW2006}. Our next task is to find a similar characterization for $1$-consecutive greedy bases. First, we need an equivalence of a basis being Schauder.

\begin{defi}\normalfont
A basis $(\ee_n)_n$ is said to be consecutive unconditional if for some constant $\mathbf C\geqslant 1$,
$$\|x-P_I(x)\|\ \leqslant\ \mathbf C\|x\|, \forall x\in\mathbb{X}, \forall I\in \mathcal{I}.$$ In this case, we say that $(\ee_n)_n$ is $\mathbf C$-suppression consecutive unconditional. 
\end{defi}

\begin{rek}\normalfont
It is easy to see that a basis is consecutive unconditional if and only if it is Schauder. Also, a $1$-suppression consecutive unconditional basis is bimonotone: indeed, given  $x\in \XX$ with finite support and $n, m\in \NN$, 
\begin{align*}
\|S_{n+m}(x)-S_n(x)\|\ =\ \|S_{n+m}(x)-S_n(S_{n+m}(x))\|&\ \leqslant\ \|S_{n+m}(x)\|\\
&\ =\ \|x-P_{I}(x)\|\ \leqslant\ \|x\|,
\end{align*}
where $I=\{n+m+1, \dots, \max(\supp(x))\}$ if $S_{n+m}(x)\neq x$. On the other hand, a bimonotone basis is not necessarily $1$-suppression consecutive unconditional. Consider the following example. Let $(\ee_n)_n$ be the canonical basis of the space $\mathbb{X}$, which is the completion of $c_{00}$ under the following norm: for $x = (x_1, x_2, \ldots)$, 
$$\|x\|\ =\sup_{M\geqslant N\geqslant 1}\left|\sum_{n=N}^M x_n\right|.$$
Then the basis is bimonotone, but it is not $1$-suppression consecutive unconditional. To see this, just note that $\|(3, -1, 3, 0, 0, \ldots)\|=5$ whereas $\|(3, 0, 3, 0, 0, \ldots)\|=6$.  
\end{rek}

Our next result  is a counterpart for consecutive greedy bases of the result obtained in \cite[Theorem 3.4]{AW2006} for greedy ones. It gives a characterization of $1$-consecutive greedy bases in Banach spaces using the $1$-Property (A) and replacing $1$-suppression unconditionality with $1$-suppression consecutive unconditionality. 

\begin{thm}\label{m3}
Let $\BB$ be a basis for a Banach space $\XX$. The following are equivalent
\begin{enumerate}
\item[i)] $\BB$ is $1$-consecutive greedy. 
\item[ii)] $\BB$ is $1$-CGPCC. 
\item[iii)] $\BB$ has the $1$-Property (A) and is $1$-suppression consecutive unconditional. 
\end{enumerate}
\end{thm}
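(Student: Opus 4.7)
The plan is to prove the cycle of implications i) $\Rightarrow$ ii) $\Rightarrow$ iii) $\Rightarrow$ i). The implication i) $\Rightarrow$ ii) is immediate, since $\mathcal D^{con}_m(x) \geq \sigma^{con}_m(x)$ for every $x \in \XX$ and every $m \in \NN$.

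For ii) $\Rightarrow$ iii), I would adapt arguments from the proof of Theorem~\ref{m1}. To derive $1$-suppression consecutive unconditionality from $1$-CGPCC, fix $x$ with finite support and an interval $I$; setting $y := x + \alpha \1_I$ for $\alpha > \|x\|_\infty$ makes $I$ a greedy set of $y$ of order $|I|$, so $1$-CGPCC with interval $I$ and scalar $\alpha$ yields $\|x - P_I(x)\| = \|y - P_I(y)\| \leq \|y - \alpha \1_I\| = \|x\|$, with a density argument handling infinite supports. For $1$-Property (A), I would mimic the superdemocracy argument from Theorem~\ref{m1}~i), now enriched by a vector $x$ with $\|x\|_\infty \leq 1$ and arbitrary signs. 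Given disjoint $A, B \subset \NN$ with $|A| \leq |B|$, signs $\varepsilon, \delta$, a suitably placed interval $I_1 \supset A$, and a disjoint interval $I_2$ of appropriate length (disjoint from $\supp(x) \cup B$), I would build a vector $w := x + \1_{\varepsilon, A} + \1_{\delta', I_2} + \1_{\varepsilon'', I_1 \setminus A}$ with suitably chosen signs $\delta', \varepsilon''$. A careful application of $1$-CGPCC to $w$ --- with a greedy set whose removal leaves $x + \1_{\varepsilon, A}$ and an interval/scalar pair recovering $x + \1_{\delta, B}$ --- then delivers the Property (A) inequality.

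For iii) $\Rightarrow$ i), assume $1$-Property (A) and $1$-suppression consecutive unconditional. Fix $x \in \XX$, a greedy set $A$ of order $m$, an interval $I$ of length $m$, and scalars $(a_n)_{n \in I}$; set $y := x - \sum_{n \in I} a_n \ee_n$, which coincides with $x$ outside $I$. Writing $R := \NN \setminus (A \cup I)$ and $u := P_R(x)$, the decomposition $\NN \setminus A = (I \setminus A) \sqcup R$ gives $x - P_A(x) = P_{I \setminus A}(x) + u$. Applying $1$-suppression consecutive unconditionality to $y$ with interval $I$ yields $\|P_{A \setminus I}(x) + u\| = \|y - P_I(y)\| \leq \|y\|$. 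With $t := \max_{n \notin A}|\ee_n^*(x)|$, the greediness of $A$ gives $|\ee_n^*(x)| \geq t$ on $A \setminus I$ and $|\ee_n^*(x)| \leq t$ on $I \setminus A$, while $\|u\|_\infty \leq t$; since $|I \setminus A| = |A \setminus I|$, the task reduces to showing $\|u + P_{I \setminus A}(x)\| \leq \|u + P_{A \setminus I}(x)\|$. I would prove this by first using $1$-Property (A) applied to $u/t$, with signs matching $\sgn(\ee_n^*(x))$, to obtain $\|u + t \1_{\varepsilon, I \setminus A}\| \leq \|u + t \1_{\delta, A \setminus I}\|$, and then bridging to the genuine projections by a coefficient-replacement argument that combines $1$-Property (A) with $1$-suppression consecutive unconditionality: on the ``small" side $I \setminus A$, write each coefficient of modulus at most $t$ as a convex combination of $\pm t$ and use that $\|u + t \1_{\eta, I \setminus A}\|$ is constant in $\eta$ by Property (A); on the ``large" side $A \setminus I$, use the dual convex-combination representation to compare $\|u + t \1_{\delta, A \setminus I}\|$ with $\|u + P_{A \setminus I}(x)\|$.

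The main obstacle in both non-trivial implications is the mismatch between the sign-vector language of $1$-CGPCC and $1$-Property (A) and the arbitrary-coefficient setting required by $1$-consecutive greediness. For ii) $\Rightarrow$ iii), $1$-CGPCC interacts only with intervals, so the interval-based construction must be tuned precisely so that the $1$-CGPCC inequality collapses exactly into the Property (A) inequality, which treats arbitrary disjoint sets. For iii) $\Rightarrow$ i), converting the sign-vector Property (A) statement into the inequality for the genuine projections (whose coefficients satisfy only modulus bounds relative to $t$) is delicate, since the constant-$1$ regime forbids any loss of constant in any of the intermediate inequalities --- in particular, the general bound in Theorem~\ref{m1} iii) is never tight enough to conclude $\CG = 1$, so this direction cannot be deduced from that estimate.
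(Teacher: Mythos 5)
Your i) $\Rightarrow$ ii), your derivation of $1$-suppression consecutive unconditionality from $1$-CGPCC (add $\alpha \1_I$ with $\alpha$ large; note that $\alpha>\|x\|_\infty$ is not quite enough, you need $\alpha$ large enough that $\alpha-\|x\|_\infty\geqslant\|x\|_\infty$, as in the paper's ``sufficiently large'') and the overall decomposition in iii) $\Rightarrow$ i) all match the paper. The first genuine gap is in ii) $\Rightarrow$ iii), in the passage from $1$-CGPCC to $1$-Property (A). Your plan mimics the superdemocracy argument of Theorem~\ref{m1}~i), but that argument subtracts a vector with \emph{arbitrary} coefficients on an interval, which $1$-CGPCC does not allow: the comparison vector must be $\lambda\1_I$ with a single scalar $\lambda$ on an interval $I$. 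If you insist that one application of $1$-CGPCC to your $w$ produce $x+\1_{\varepsilon,A}$ after removing a greedy set and $x+\1_{\delta,B}$ (or an intermediate $x+\lambda\1_{I_2}$) as $w-\lambda\1_I$, then necessarily $A\subset I$ and the projection identity forces $\varepsilon_n=\lambda$ for all $n\in A$; moreover $I$ must avoid $\supp(x)\cup B$, which is impossible when gaps of $A$ meet $\supp(x)\cup B$. So the construction only reaches constant signs on $A$ (this is exactly why Theorem~\ref{m1}~ii) claims only democracy, not superdemocracy, from CGPCC), while Property (A) requires arbitrary $\varepsilon$, $\delta$ and arbitrary disjoint $A$, $B$. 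The missing idea is the paper's reduction to singletons: for $i,j\notin\supp(x)$ one has $\|x+\varepsilon_i\ee_i\|=\|(x+\varepsilon_i\ee_i+\varepsilon_j\ee_j)-\varepsilon_j\ee_j\|\leqslant\|(x+\varepsilon_i\ee_i+\varepsilon_j\ee_j)-\varepsilon_i\ee_i\|$, because $\{j\}$ is a greedy set and $\{i\}$ is an interval of length one with a free scalar $\varepsilon_i$; transferring one unimodular coefficient at a time and inducting yields $1$-Property (A) for arbitrary sets and signs. Without this (or an equivalent mechanism) your sketch does not close.

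The second gap is in iii) $\Rightarrow$ i), on the ``large'' side. Your small-side step is fine (coefficients of modulus $\leqslant t$ lie in the closed convex hull of the circle of radius $t$, and then $1$-Property (A) compares the disjoint sets $I\setminus A$ and $A\setminus I$ -- note Property (A) as defined compares disjoint sets, so the phrase ``constant in $\eta$'' should be replaced by a direct comparison with $A\setminus I$; this is exactly Lemma~\ref{l1}). But the inequality $\|u+t\1_{\delta,A\setminus I}\|\leqslant\|u+P_{A\setminus I}(x)\|$ is not a ``dual convex-combination'' fact: writing $t\delta_n$ as a convex combination of $\ee_n^*(x)$ and $0$ expresses $u+t\1_{\delta,A\setminus I}$ as a convex combination of vectors $u+P_{D'}(x)$, $D'\subset A\setminus I$, and bounding those requires removing the greedy set $(A\setminus I)\setminus D'$ with constant $1$, i.e.\ $1$-suppression quasi-greediness -- which is not among your hypotheses and is not a formal consequence of convexity. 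The paper supplies exactly this ingredient: $1$-Property (A) implies $1$-suppression quasi-greediness (Proposition~\ref{p1}, from \cite{AA2017}), and then the truncation lemma \cite[Lemma 2.5]{BBG2017} gives the comparison with constant $1$ (alternatively, Proposition~\ref{prop: 1enhancedpropertyA} of the paper, proved via Hahn--Banach from $1$-Property (A) alone, yields it directly). With that ingredient added, your iii) $\Rightarrow$ i) coincides with the paper's proof.
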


To prove Theorem \ref{m3}, we first need an useful lemma. 

\begin{lem}\label{l1}
If a basis $(\ee_n)_{n\in\mathbb N}$ has $\mathbf C$-Property (A), then
\begin{equation}\label{e5}\|x\|\ \leqslant\ A_p \mathbf C\|x - P_A(x) + \1_{\varepsilon B}\|,\end{equation}
for all $x\in\mathbb{X}$ with $\|x\|_\infty\leqslant 1$, for all finite sets $A, B\subset\mathbb{N}$ with $|A|\leqslant |B|$ and $(\supp(x-P_A(x))\cup A)\sqcup B$, and for all $\varepsilon, \delta\in \E$. 
\end{lem}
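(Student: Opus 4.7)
The plan is to write $x = (x - P_A(x)) + P_A(x)$ and apply Property (A) after replacing the ``interior'' coefficient vector $P_A(x)$ by a sign vector on $A$ via a standard convex-combination bound. First I would set $y := x - P_A(x)$ and verify that $(y,A,B)$ satisfies the hypotheses of Property (A) with $\tau=1$: since $\ee_n^*(y)=0$ for $n\in A$ and $\ee_n^*(y)=\ee_n^*(x)$ otherwise, one has $\supp(y)\cap A=\emptyset$ and $\|y\|_\infty\leqslant\|x\|_\infty\leqslant 1$; $|A|\leqslant|B|$ holds by hypothesis, and $B$ is disjoint from $\supp(y)\cup A$ as part of the assumption on $B$, so $\supp(y),A,B$ are pairwise disjoint.

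Next I would invoke the standard $p$-Banach estimate (see, e.g., \cite{AABW2021}, whose convex-combination argument produces precisely the constant $A_p$) that for any $z\in\XX$ with $\supp(z)\cap A=\emptyset$ and scalars $(a_n)_{n\in A}$ with $|a_n|\leqslant 1$,
\[
\left\|z+\sum_{n\in A}a_n\ee_n\right\| \ \leqslant\ A_p\sup_{\delta\in\E}\|z+\1_{\delta,A}\|.
\]
Applying this to $z=y$ and $a_n=\ee_n^*(x)$ gives $\|x\|\leqslant A_p\sup_{\delta}\|y+\1_{\delta,A}\|$. For each sign $\delta$, Property (A) then yields
\[
\|y+\1_{\delta,A}\| \ \leqslant\ \CC\,\|y+\1_{\varepsilon,B}\|,
\]
and chaining the two inequalities delivers exactly \eqref{e5}.

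The only step with real content is the scalar-to-signs passage. In the Banach case ($p=1$) it is immediate from the fact that the closed unit ball of $\FF^A$ is the closed convex hull of the sign vectors, both for $\FF=\RR$ and $\FF=\CCC$, so the constant is $A_1=1$ and convexity of the norm suffices. For $p<1$ the norm is only $p$-subadditive, and one must invoke the $A_p$-estimate from \cite{AABW2021}; this is the only nontrivial technical ingredient and is the point where the constant $A_p$ in the statement appears.
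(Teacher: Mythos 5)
Your argument is correct and coincides with the paper's proof: both decompose $x$ as $(x-P_A(x))+\sum_{n\in A}\ee_n^*(x)\ee_n$, pass from bounded coefficients on $A$ to signs via the $A_p$-estimate of \cite{AABW2021} (Corollary 2.3 there), and then apply $\CC$-Property (A) to each sign pattern. Your additional verification of the disjointness and $\ell_\infty$ hypotheses, and the remark that $A_1=1$ in the locally convex case, are fine but do not change the route.
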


\begin{proof}
Let $x, A, B, \varepsilon$ be chosen as in \eqref{e5}. 
By \cite[Corollary 2.3]{AABW2021} and $\mathbf C$-Property (A), we have
\begin{align*}
    \|x\|\ =\ \left\|x- P_A(x) + \sum_{n\in A}\ee_n^*(x)\ee_n\right\|&\ \leqslant\ A_p \sup_{\delta}\|x-P_A(x) + \1_{\delta A}\|\\
    &\ \leqslant\ A_p\mathbf C\|x-P_A(x) + \1_{\varepsilon, B}\|. 
\end{align*}
This completes our proof. 
\end{proof}

\begin{prop}\cite[Proposition 2.5]{AA2017}\label{p1}
Let $(\ee_n)_{n}$ be a basis for a Banach space $\XX$. If $(\ee_n)_n$ has $1$-Property (A), then the basis is $1$-suppression quasi-greedy. 
\end{prop}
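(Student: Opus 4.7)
The plan is to prove the proposition by induction on the cardinality of the greedy set, after extracting a sign-invariance consequence of $1$-Property (A).

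First, I would establish the following sign-invariance lemma: for any $y\in\XX$ with $\|y\|_\infty\leqslant 1$ and any finite set $A$ disjoint from $\supp(y)$, the norm $\|y+\1_{\varepsilon,A}\|$ does not depend on the choice of $\varepsilon\in\E$. Since $\XX$ is infinite dimensional, one may pick an auxiliary set $B$ disjoint from $A\cup\supp(y)$ with $|B|=|A|$, apply $1$-Property (A) to get $\|y+\1_{\varepsilon,A}\|\leqslant\|y+\1_{\delta,B}\|$, and then apply it again with $A$ and $B$ swapped to obtain the reverse inequality. From this lemma, the decomposition
\[
y\;=\;\tfrac{1}{2}(y+\1_{\varepsilon,A})+\tfrac{1}{2}(y-\1_{\varepsilon,A}),
\]
together with the triangle inequality (available since we are in a Banach space, $p=1$), yields the key lower bound $\|y\|\leqslant\|y+\1_{\varepsilon,A}\|$.

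Next, I would handle the base case $|A|=1$. If $A=\{n_0\}$ is a greedy set of $x$, rescale so that $|\ee_{n_0}^*(x)|=1$; then $y:=x-\ee_{n_0}^*(x)\,\ee_{n_0}$ satisfies $\|y\|_\infty\leqslant 1$ and $n_0\notin\supp(y)$, so the lower bound above gives $\|x-P_{\{n_0\}}(x)\|=\|y\|\leqslant\|y+\sgn(\ee_{n_0}^*(x))\,\ee_{n_0}\|=\|x\|$, which is the statement of suppression $1$-quasi-greediness for singletons.

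For the inductive step, assume the conclusion for greedy sets of size $\leqslant k$, and take $|A|=k+1$. Choose $n_0\in A$ maximizing $|\ee_{n_0}^*(x)|$ on $A$. By the greediness of $A$, $\{n_0\}$ is itself a greedy set of $x$ of size $1$, so the base case gives $\|x_1\|\leqslant\|x\|$ for $x_1:=x-\ee_{n_0}^*(x)\,\ee_{n_0}$. A direct verification shows that $A\setminus\{n_0\}$ is a greedy set of $x_1$ of size $k$, and the inductive hypothesis applied to $x_1$ yields $\|x_1-P_{A\setminus\{n_0\}}(x_1)\|\leqslant\|x_1\|$. Since $x_1-P_{A\setminus\{n_0\}}(x_1)=x-P_A(x)$, chaining the two inequalities gives $\|x-P_A(x)\|\leqslant\|x\|$, completing the induction.

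The main obstacle is recognizing the correct peeling strategy: picking $n_0$ as the maximizer on $A$ is essential because it simultaneously ensures that $\{n_0\}$ is a greedy singleton of $x$ (so the base case applies to $x$ itself) and that $A\setminus\{n_0\}$ remains a greedy set of the residual vector $x_1$ (so the inductive hypothesis applies). Once this choice is identified, the whole proof reduces to the sign-invariance lemma and the short convexity argument that follows from it.
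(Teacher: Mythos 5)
The paper itself does not prove this proposition (it is imported verbatim from [AA2017, Proposition 2.5]), so there is no internal argument to compare against; your peeling strategy is in the spirit of the Albiac--Ansorena characterizations and, except for one step, it works. The problematic step is your proof of the sign-invariance lemma: you pick an auxiliary set $B$ disjoint from $A\cup\supp(y)$ with $|B|=|A|$, but such a $B$ need not exist, since $\supp(y)$ may be cofinite. This is not a marginal case but the typical one in your application: in the base case $y=x-\ee_{n_0}^*(x)\ee_{n_0}$, and for an $x$ with $\ee_n^*(x)\neq 0$ for every $n$ (such vectors exist, e.g.\ $\sum_n 2^{-n}\ee_n/\|\ee_n\|$) one has $\supp(y)\cup A=\NN$, so the auxiliary-set trick breaks down precisely where you need it.

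The gap is repairable in two ways. The routine repair: the conclusion of the lemma is norm-continuous in $y$, so approximate $y$ by $z\in[\ee_n:n\in\NN]$, replace $z$ by $z-P_A(z)$ rescaled so that its coordinates are bounded by $1$ (both modifications move $z$ by an arbitrarily small amount), apply the lemma to this finitely supported vector, and pass to the limit. The better repair: you do not need sign invariance or the convexity splitting at all. Nothing in Definition~\ref{defatau} excludes $A=\emptyset$ (the conditions $|A|\leqslant |B|$ and pairwise disjointness are then vacuous), and taking $A=\emptyset$ in $1$-Property (A) gives directly $\|y\|\leqslant \|y+\1_{\delta,B}\|$ whenever $\|y\|_{\infty}\leqslant 1$ and $B$ is a finite set disjoint from $\supp(y)$; with $B=\{n_0\}$ and $\delta_{n_0}=\sgn(\ee_{n_0}^*(x))$ this is exactly your base-case inequality, with no auxiliary set required. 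With the base case secured, the rest of your argument is correct: choosing $n_0$ as a maximizer of $|\ee_n^*(x)|$ over $A$ makes $\{n_0\}$ a greedy singleton of $x$ and $A\setminus\{n_0\}$ a greedy set of $x_1=x-\ee_{n_0}^*(x)\ee_{n_0}$, and since $x_1-P_{A\setminus\{n_0\}}(x_1)=x-P_A(x)$, chaining the two estimates completes the induction.
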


\begin{proof}[Proof of Theorem \ref{m3}]
That i) $\Longrightarrow$ ii) follows directly from definitions. 

Let us show ii) $\Longrightarrow$ iii).
Fix $x\in \mathbb{X}$ with $\|x\|_{\infty}\leqslant 1$ and choose distinct natural numbers $i,j \not\in \supp(x)$ and signs $\varepsilon_j,\varepsilon_i$.  We have 
$$
\|x+\varepsilon_i \ee_i\|\ =\ \|(x+\varepsilon_i \ee_i+\varepsilon_j \ee_j) -\varepsilon_j \ee_j\|\ \leqslant\ \|(x+\varepsilon_i \ee_i+\varepsilon_j \ee_j) -\varepsilon_i \ee_i\|\ =\ \|x+\varepsilon_j \ee_j\|.
$$
By induction, $(\ee_n)_n$ has $1$-Property (A).
To see that the basis is $1$-suppression consecutive unconditional, let $x\in \mathbb{X}$ and $I\in\mathcal{I}$. Choose $\alpha > 0$ sufficiently large such that if $y = \sum_{n\in I} (\alpha + \ee_n^*(x))\ee_n + \sum_{n\notin I}\ee_n^*(x)\ee_n$, then $I$ is a greedy sum of $y$. We have
$$\|x-P_I(x)\|\ =\ \|y-P_I(y)\|\ \leqslant\ \mathcal{D}^{con}_{|I|}(y)\ \leqslant\ \|y-\alpha \1_{I}\|\ =\ \|x\|.$$
Hence, $(\ee_n)_n$ is $1$-suppression consecutive unconditional. 

Finally, we prove that iii) $\Longrightarrow$ i).
Let $x\in\mathbb{X}$, $m\in\mathbb{N}$, $\Lambda\in G(x, m,1)$, and $I\in \mathcal{I}^{(m)}$. Choose $(a_n)_{n\in I}\subset \mathbb{F}$ arbitrarily. We need to show that
$$\|x - P_{\Lambda}(x)\|\ \leqslant\ \left\|x- \sum_{n\in I}a_n\ee_n\right\|.$$
Let $\alpha:= \min_{n\in \Lambda}|\ee_n^*(x)|$ and $\varepsilon = (\sgn(\ee_n^*(x)))$. Then $\|x-P_{\Lambda}(x)\|_\infty\leqslant \alpha$, so we have
\begin{align*}
    &\|x-P_{\Lambda}(x)\|\\
    &\ \leqslant\ \|x-P_{\Lambda}(x) - P_{I\backslash \Lambda}(x) + \alpha \1_{\varepsilon (\Lambda\backslash I)}\|\mbox{ by Lemma \ref{l1}}\\
&\ =\ \|x-P_{\Lambda\cup I}(x) + \alpha \1_{\varepsilon (\Lambda\backslash I)}\|\\
    &\ \leqslant\ \|x-P_{\Lambda\cup I}(x) + P_{\Lambda\backslash I}(x)\|\mbox{ by Proposition \ref{p1} and \cite[Lemma 2.5]{BBG2017}}\\
    &\ =\ \|x-P_{I}(x)\|\ \leqslant\ \left\|x- \sum_{n\in I}a_n\ee_n\right\|\mbox{ by $1$-consecutive unconditionality.}
\end{align*}
This completes our proof. 
\end{proof}

\section{More on Property (A, $\tau$)}\label{Atau}

In Theorem~\ref{m3}, the equivalence involves $1$-Property (A), but we know by Corollary~\ref{c2} that the basis also has Property $(A,\tau)$ for all $0<\tau<1$. So, it is natural to ask whether the constant is still $1$ for all such $\tau$. Our next result answers that question in the affirmative: in fact, we do not need to assume $1$-consecutive greediness to derive $1$-Property $(A,\tau)$ from $1$-Property $(A)$.

\begin{prop}\label{prop: 1enhancedpropertyA}Let $\BB$ be a basis for a Banach space $\XX$. If $\BB$ has $1$-Property $(A)$, then 
\begin{equation}
\|u+y\|\ \leqslant\ \|a u+z\|\label{lemma: enhancedpropertyAa}
\end{equation}
for every $a\in \FF$ with $|a|\geqslant 1$ and every $u,y,z\in \XX$ satisfying:
\begin{itemize}
\item $\supp(u)\cap (\supp(y)\cup \supp(z))=\emptyset$.
\item $|\supp(y)|\leqslant |\supp(z)|<\infty$. 
\item $\|u+y\|_{\infty}\leqslant 1$. 
\item $\min_{n\in \supp(z)}|\ee^*_n(z)|\geqslant 1$. 
\end{itemize}
In particular,
\begin{equation}
\|\tau x+\1_{\varepsilon,A}\|\ \leqslant\ \|x+\1_{\delta ,B}\|\label{lemma: enhancedpropertyAb}
\end{equation}
for all $0<\tau\leqslant 1$, for all $x\in \XX$ with $\|x\|_{\infty}\leqslant \tau^{-1}$, for all finite sets $A,B\subset \mathbb{N}$ with $|A|\leqslant |B|$ and $\supp(x)\cap (A\cup B)=\emptyset$, and for all $\varepsilon, \delta\in \E$.
\end{prop}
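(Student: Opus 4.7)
My plan is to prove \eqref{lemma: enhancedpropertyAa} and then derive \eqref{lemma: enhancedpropertyAb} by a direct substitution. I would split the target inequality into the chain
\[
\|u+y\|\ \leq\ \|u+\1_{\delta,B}\|\ \leq\ \|au+z\|
\]
where $B:=\supp(z)$ and $\delta_n:=\sgn(\ee_n^*(z))$, and tackle each piece separately. By rotation invariance of the $\FF$-norm (over $\CCC$, via $z\mapsto\overline{a}/|a|\cdot z$; over $\RR$, via an analogous sign-flip together with the sign-invariance of sign vectors forced by $1$-Property~$(A)$), I may reduce to the case $a\geq 1$ real. For the first inequality, since $\|u+y\|_\infty\leq 1$ and $\supp(u)\cap A=\emptyset$ with $A:=\supp(y)$, one has $|\ee_n^*(y)|\leq 1$ on $A$, so $y$ can be written as a per-coordinate convex combination of sign vectors $\1_{\xi,A}$ with $|\xi_n|=1$. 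Convexity of the norm together with $1$-Property~$(A)$ applied with base $u$ (using $\|u\|_\infty\leq 1$ and $|A|\leq|B|$) then yield $\|u+y\|\leq \max_\xi\|u+\1_{\xi,A}\|\leq \|u+\1_{\delta,B}\|$.

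For the second and main inequality, I plan an iterative coefficient-reduction argument that transforms $au+z$ toward $u+\1_{\delta,B}$ without increasing the norm at any step. Let $v$ denote the current vector and $n_\ast$ a coordinate of maximum current modulus, so that $\{n_\ast\}$ is a $1$-greedy set of $v$. Writing $c_{n_\ast}:=|\ee_{n_\ast}^*(v)|$, $\varepsilon_{n_\ast}:=\sgn(\ee_{n_\ast}^*(v))$, and $t_{n_\ast}\in[0,c_{n_\ast}]$ for the target modulus at coordinate $n_\ast$ (namely $|\ee_{n_\ast}^*(u)|$ if $n_\ast\in\supp(u)$, and $1$ if $n_\ast\in B$), the convex identity
\[
v-(c_{n_\ast}-t_{n_\ast})\varepsilon_{n_\ast}\ee_{n_\ast}\ =\ \frac{t_{n_\ast}}{c_{n_\ast}}\,v\,+\,\Bigl(1-\frac{t_{n_\ast}}{c_{n_\ast}}\Bigr)(v-c_{n_\ast}\varepsilon_{n_\ast}\ee_{n_\ast})
\]
combined with convexity of the norm and $1$-suppression quasi-greediness (which follows from $1$-Property~$(A)$ by Proposition~\ref{p1}) applied to the greedy set $\{n_\ast\}$, shows that this partial reduction does not increase the norm. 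Iterating in decreasing order of current modulus reduces every coordinate to its target. The main obstacle will be the final phase, when the remaining coordinates are in $\supp(u)$ with $a|\ee_n^*(u)|<1$: after all coordinates of modulus $\geq 1$ have been brought down to targets of modulus $\leq 1$, these residual coordinates may fail to be current maxima so that $\{n_\ast\}$-greediness breaks. I plan to handle this corner by a complementary application of $1$-Property~$(A)$ that absorbs the small residual $u$-contributions through a second convex decomposition of sign vectors on $\supp(u)$, in the same spirit as the first inequality.

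For \eqref{lemma: enhancedpropertyAb}, I would substitute $u=\tau x$, $y=\1_{\varepsilon,A}$, $a=\tau^{-1}\geq 1$, and $z=\1_{\delta,B}$ in \eqref{lemma: enhancedpropertyAa}. The hypotheses are immediate: the three supports are pairwise disjoint, $|A|\leq|B|$, $\|u+y\|_\infty=\max(\tau\|x\|_\infty,1)\leq 1$, and $\min_{n\in B}|\ee_n^*(z)|=1$; since $au=x$, the conclusion of \eqref{lemma: enhancedpropertyAa} reads $\|\tau x+\1_{\varepsilon,A}\|\leq \|x+\1_{\delta,B}\|$, which is precisely \eqref{lemma: enhancedpropertyAb}.
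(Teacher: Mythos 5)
Your first step ($\|u+y\|\leqslant\|u+\1_{\delta,B}\|$ via convexity plus $1$-Property (A)) and your derivation of \eqref{lemma: enhancedpropertyAb} from \eqref{lemma: enhancedpropertyAa} coincide with the paper's argument, but the remaining comparison $\|u+\1_{\delta,B}\|\leqslant\|au+z\|$ is the heart of the proposition, and there your proposal has a genuine gap --- exactly where you flag it, and it is not a mere corner case. The one-coordinate reduction step is valid only while the coordinate being lowered is a greedy set of the current vector; once the coordinates of $B$ sit at modulus $1$, any remaining $n\in\supp(u)$ with $|a|\,|\ee_n^*(u)|<1$ is never a current maximum, so $1$-suppression quasi-greediness gives nothing for it. The patch you sketch cannot close this: $1$-Property (A) compares $\|w+\1_{\xi,A}\|$ with $\|w+\1_{\delta',B'}\|$ for the \emph{same} background vector $w$ and \emph{unimodular} indicator vectors, whereas what is needed is to replace the background contribution $a u_2$ by $u_2$, where $u_2$ denotes the part of $u$ with $|a|\,|\ee_n^*(u)|<1$ (coefficients small and not unimodular). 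Equivalently, after the convexity identity $u_1+u_2+\1_{\delta,B}=\frac{1}{a}\bigl(u_1+au_2+\1_{\delta,B}\bigr)+\bigl(1-\frac{1}{a}\bigr)\bigl(u_1+\1_{\delta,B}\bigr)$ (with $u_1=u-u_2$ and $a\geqslant 1$ real), you would need $\|u_1+\1_{\delta,B}\|\leqslant\|u_1+au_2+\1_{\delta,B}\|$, i.e.\ norm-one removal of the \emph{smallest} coefficients; that is a projection bound onto a set that need not be greedy, which neither $1$-suppression quasi-greediness (Proposition \ref{p1}) nor Property (A) supplies. So \eqref{lemma: enhancedpropertyAa} is not established. (A smaller point: your iteration should be preceded by a density reduction to finitely supported $u$, since otherwise the ``decreasing order'' scheme need not terminate.)

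For comparison, the paper closes precisely this step by duality rather than iteration: take a norming functional $x^*$ of $u+\1_{\delta,B}$ and use $1$-Property (A), applied to a suitably sign-rotated copy $\gamma u+\1_{\delta'\delta,B}$, to force alignment, namely $x^*(u)=|x^*(u)|$ and $\delta_n x^*(\ee_n)=|x^*(\ee_n)|$ for all $n\in B$; then, with $\theta$ unimodular such that $a\theta=|a|$, one gets $\|au+z\|=\|\,|a|u+\theta z\|\geqslant|x^*(|a|u+\theta z)|=|a|\,|x^*(u)|+\sum_{n\in B}|\ee_n^*(\theta z)|\,|x^*(\ee_n)|\geqslant x^*(u+\1_{\delta,B})=\|u+\1_{\delta,B}\|$, using $|a|\geqslant1$ and $|\ee_n^*(z)|\geqslant1$. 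This handles the rescaling of all of $u$ in one stroke and is where local convexity ($p=1$, Hahn--Banach) genuinely enters; some alignment-of-the-norming-functional argument of this kind is the ingredient your proposal is missing.
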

\begin{proof}
First, we prove  \eqref{lemma: enhancedpropertyAa}: choose $\theta\in\mathbb{F}$ with $|\theta|=1$ so that $|a|=a\theta$  and set $z_1:=\theta z$, $A:=\supp(y)$, and $B:=\supp(z_1)$. By convexity, there is $\varepsilon\in \E$ such that 
\begin{equation}
\|u+y\|\ \leqslant\ \|u+\1_{\varepsilon, A}\|.  \label{lemma: enhancedpropertyA step1 }
\end{equation}
Set $\delta_n:=\sgn(\ee_n^*(z_1))$ for all $n\in B$. By $1$-Property (A),
\begin{equation}
\|u+\1_{\varepsilon, A}\|\ \leqslant\ \|u+\1_{\delta, B}\|. \label{lemma: enhancedpropertyA step2 }
\end{equation}
By the Hahn-Banach theorem, there exists $x^{*}\in \XX^*$ with $\|x^*\|=1$ so that 
$$
x^*(u+\1_{\delta, B})\ =\ \|u+\1_{\delta, B}\|.
$$
For each $n\in B$, pick $\delta_n'$ with $|\delta'_n|=1$ so that $\delta'_n\delta_n x^*(\ee_n)=|x^*(\ee_n)|$, and choose $\gamma\in \FF$ with $|\gamma|=1$ so that $\gamma x^*(u)=|x^*(u)|$. We have 
\begin{align*}
x^*(u)+\sum_{n\in B}\delta_n x^*(\ee_n)\ =\ \|u+\1_{\delta, B}\|\ =\ \left\| \gamma u+ \1_{\delta'\delta, B}\right\|&\ \geqslant\ \left| x^*(\gamma u+ \1_{\delta'\delta, B})\right|\\
&\ =\ |x^*(u)|+\sum_{n\in B}| x^*(\ee_n)|,
\end{align*}
where the second equality is due to $1$-Property (A). 
Therefore, $x^*(u)=|x^*(u)|$ and $\delta_n x^*(\ee_n)=|x^*(\ee_n)|$ for all $n\in B$. Hence, 
\begin{align*}
\left\| a u+ z \right\|&\ =\ \left\| |a| u+ z_1\right\|\ \geqslant\ |x^{*}(|a|u+z_1)|\\
&\ =\ \left|  |a| x^{*}(u)+\sum_{n\in B}\ee_n^*(z_1)   x^{*}(\ee_n)\right|\ =\ |a| |x^{*}(u)|+\sum_{n\in B}|\ee_n^*(z_1)||x^{*}(\ee_n)|\\
&\ \geqslant\ |x^{*}(u)|+\sum_{n\in B} |x^{*}(\ee_n)|\ =\ \|u+\1_{\delta, B}\|. 
\end{align*} 
Combining the above with \eqref{lemma: enhancedpropertyA step1 } and \eqref{lemma: enhancedpropertyA step2 }, we obtain \eqref{lemma: enhancedpropertyAa}. 

Finally, \eqref{lemma: enhancedpropertyAb} follows from \eqref{lemma: enhancedpropertyAa} by taking $u=\tau x$, $a=\tau^{-1}$, $y=\1_{\varepsilon,A}$ and $z=\1_{\delta, B}$.
\end{proof}

Next, we answer \cite[Problem 7.3]{C3}, which asked whether Property (A, $\tau$) for some $\tau\in (0,1]$ implies Property (A, $t$) for all $t\in (0,1]$ (with possibly different constants). The proof leverages a recent result in \cite{AAB2022}, which showed the surprising equivalence between the so-called quasi-greedy for largest coefficients and near unconditionality. We recall these definitions and a main result in \cite{AAB2022}.

\begin{defi}\normalfont\label{defQGLC}\cite[Definition 4.6]{AABW2021}
A basis $\mathcal{B}$ in a $p$-Banach space $\mathbb X$ is quasi-greedy for largest coefficients (QGLC) if there exists $\mathbf C\geqslant 1$ such that
$$\|1_{\varepsilon A}\|\ \leqslant\ \mathbf C\|x + 1_{\varepsilon A}\|,$$
for any finite $A\subset\mathbb{N}$, sign $\varepsilon$, and $x\in\mathbb{X}$ with $\|x\|_\infty\leqslant 1$ and $\supp(x)\sqcup A$. 
\end{defi}

\begin{defi}\normalfont\label{nearUnc}\cite{E1978}
A basis $\mathcal{B}$ in a $p$-Banach space $\mathbb X$ is nearly unconditional if for every $t\in (0,1]$, there is a constant $\phi(t)\geqslant 1$ such that
$$\|P_A(x)\|\ \leqslant\ \phi(t)\|x\|,$$
for all $x\in\XX$ with $\|x\|_\infty\leqslant 1$ and for $A\in\mathbb{N}^{<\infty}$ satisfying $\min_{n\in A}|\ee^*_n(x)|\geqslant t$. 
\end{defi}

\begin{thm}\cite[Theorem 2.6]{AAB2022}\label{QGLC+nearlyunc}
A basis $\mathcal{B}$ in a $p$-Banach space $\mathbb X$ is QGLC if and only if it is nearly unconditional. 
\end{thm}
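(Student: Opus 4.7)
The plan is to prove the two directions separately.

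For nearly unconditional $\Rightarrow$ QGLC, the argument is immediate. Given $x\in\XX$ with $\|x\|_\infty\leq 1$, a finite set $A\subset\NN$ with $\supp(x)\cap A=\emptyset$, and a sign vector $\varepsilon$, I would set $y:=x+\1_{\varepsilon, A}$. Then $\|y\|_\infty\leq 1$ and $\min_{n\in A}|\ee_n^*(y)|=1$, so the nearly unconditional property at $t=1$ gives $\|\1_{\varepsilon, A}\|=\|P_A(y)\|\leq\phi(1)\|y\|=\phi(1)\|x+\1_{\varepsilon, A}\|$, which is QGLC with constant $\mathbf C=\phi(1)$.

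For QGLC $\Rightarrow$ nearly unconditional, fix $t\in(0,1]$, let $x\in\XX$ with $\|x\|_\infty\leq 1$, and let $A$ be finite with $\min_{n\in A}|\ee_n^*(x)|\geq t$. I would partition $A=\bigsqcup_{k=0}^K A_k$ dyadically, with $A_k:=\{n\in A:2^{-k-1}<|\ee_n^*(x)|\leq 2^{-k}\}$ and $K:=\lfloor\log_2(1/t)\rfloor$, so that the $p$-triangle inequality yields $\|P_A(x)\|^p\leq\sum_{k=0}^K\|P_{A_k}(x)\|^p$. On each level, since $P_{A_k}(x)/2^{-k}$ has coefficients of magnitude at most $1$ on $A_k$, the sign-randomization inequality \cite[Corollary 2.3]{AABW2021} gives $\|P_{A_k}(x)\|\leq A_p\cdot 2^{-k}\sup_{\delta}\|\1_{\delta, A_k}\|$, reducing the task to controlling $\|\1_{\delta, A_k}\|$ uniformly in $\delta$ by a multiple of $2^k\|x\|$. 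For each $\delta$, rescaled QGLC at scale $2^{-k}$ with the perturbation $w_k:=P_{V_k}(x)$, where $V_k:=\{n:|\ee_n^*(x)|\leq 2^{-k-1}\}$, satisfies $\|w_k\|_\infty\leq 2^{-k}$ and $\supp(w_k)\cap A_k=\emptyset$, hence
$$\|2^{-k}\1_{\delta, A_k}\|\ \leq\ \mathbf C\,\|P_{V_k}(x)+2^{-k}\1_{\delta, A_k}\|.$$

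The main obstacle is then bounding $\|P_{V_k}(x)+2^{-k}\1_{\delta, A_k}\|$ by a constant multiple of $\|x\|$ uniformly in $\delta$. The vector inside the norm coincides with $x$ on $V_k$, but on $U_k:=V_k^c$ the block $P_{U_k\setminus A_k}(x)$ of large coefficients outside $A_k$ has been deleted and the block $P_{A_k}(x)$ has been replaced by its ``lift'' $2^{-k}\1_{\delta, A_k}$. The plan to overcome this is an induction on the number of dyadic levels: restore the deleted large coefficients one level at a time, absorbing a universal multiplicative constant at each step via another application of QGLC (to pass between the lifted block and the true block at the next scale) combined with \cite[Corollary 2.3]{AABW2021} applied to the discrepancy, whose $L^\infty$-norm $\|P_{A_k}(x)-2^{-k}\1_{\varepsilon(x), A_k}\|_\infty\leq 2^{-k-1}$ yields $\|P_{A_k}(x)-2^{-k}\1_{\varepsilon(x), A_k}\|\leq A_p\cdot 2^{-k-1}\sup_{\delta}\|\1_{\delta, A_k}\|$. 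Summing the resulting bounds over the $K+1$ levels will give $\|P_A(x)\|\leq\phi(t)\|x\|$ with $\phi(t)$ of order $(\log_2(1/t))^{1/p}$. I expect the delicate technical point to be arranging the level-by-level restoration uniformly in $\delta$ so that the per-step constants combine into a single $\phi(t)$ depending only on $t$ and the QGLC constant, without losing control due to the $p$-convexity.
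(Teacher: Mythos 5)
This theorem is quoted from \cite[Theorem 2.6]{AAB2022}; the present paper contains no proof of it, so there is no internal argument to compare with --- you are in effect re-proving the main (and, as the authors stress, surprising) result of that reference. Your direction ``nearly unconditional $\Rightarrow$ QGLC'' is correct and is indeed immediate: with $y=x+\1_{\varepsilon,A}$ one has $\|y\|_\infty\leqslant 1$ and $\min_{n\in A}|\ee_n^*(y)|=1$, so the case $t=1$ gives QGLC with constant $\phi(1)$.

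The converse, which is where all the content lies, is not established by your sketch, and the gap sits exactly at what you call the main obstacle. First, your discrepancy estimate $\|P_{A_k}(x)-2^{-k}\1_{\varepsilon(x),A_k}\|\leqslant A_p 2^{-k-1}\sup_\delta\|\1_{\delta,A_k}\|$ re-introduces the very quantity $\sup_\delta\|\1_{\delta,A_k}\|$ you are trying to bound; closing the estimate then requires absorbing it, which needs roughly $\mathbf C A_p/2<1$ --- and the situation is worse for arbitrary $\delta$, where the discrepancy has $\ell_\infty$-norm up to $2^{-k+1}$ rather than $2^{-k-1}$ unless you first prove a sign-invariance lemma for $\|\1_{\delta,A_k}\|$ from QGLC. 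Since the QGLC constant $\mathbf C$ may be arbitrarily large, with dyadic bands this absorption simply fails. Second, your perturbation $w_k=P_{V_k}(x)$ deletes not only the higher bands but also the band-$k$ coefficients outside $A$, so ``restoring the deleted large coefficients one level at a time'' is circular at the same level $k$; and restoring a higher band is itself a projection onto a set of large coefficients, i.e.\ precisely the near-unconditionality being proved, so the induction hypothesis has to be formulated and invoked with care that the sketch does not supply. A banding argument of this type can be repaired, but only with the ingredients you omit: bands of ratio $\theta$ close to $1$, chosen in terms of $\mathbf C$ and $p$ so that the discrepancy factor $A_p(1-\theta)$ can be absorbed; a preliminary consequence of QGLC of the form $\|\1_{\delta,S}\|\leqslant 2^{1/p}\mathbf C\|\1_{\delta',S}\|$ (obtained by splitting $S$ where the two sign patterns differ, with extra care for complex scalars); a perturbation $w=x-P_{T\cup S}(x)$ that keeps the same-band coefficients not being lifted, so that only strictly higher bands need restoring; and an induction on the band index in which the constants compound multiplicatively, yielding $\phi(t)$ polynomial in $1/t$. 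Your claimed growth $(\log_2(1/t))^{1/p}$ is incompatible with your own ``absorb a constant at each step'' plan, which is a further sign that the bookkeeping was not carried through; as written, the hard implication remains unproved.
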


We are ready to prove the second main result in this section. 

\begin{prop}\label{lemmaalltau}Let $0<\tau, p\leqslant 1$  and $\BB$ be a basis of a $p$-Banach space $\XX$. If $\BB$ has $\mathbf C$-Property $(A,\tau)$, then for every $0<t\leqslant 1$, there is $\mathbf D(t)\geqslant 1$ such that $\BB$ has $\mathbf D(t)$-Property $(A,t)$. 
\end{prop}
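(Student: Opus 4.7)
The plan is to prove Proposition~\ref{lemmaalltau} in three steps: first, show that Property $(A,\tau)$ by itself forces $\BB$ to be QGLC; second, invoke Theorem~\ref{QGLC+nearlyunc} to obtain near unconditionality with some modulus $\phi$; and third, use the simple scaling $x \mapsto (t/\tau)\,x$ together with near unconditionality to transfer from Property $(A,\tau)$ to Property $(A,t)$.

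For the first step, I would fix $y\in \XX$ with $\|y\|_{\infty}\leqslant 1$, a finite set $A$ disjoint from $\supp(y)$, and a sign $\varepsilon$. Choose an auxiliary set $B$ disjoint from $A\cup \supp(y)$ with $|B|=|A|$ and any sign $\delta$. Applying Property $(A,\tau)$ to $x:=y$ but with the roles of the $A$- and $B$-sets in its definition reversed (allowed since $|B|=|A|$ and $\|y\|_\infty \leqslant 1\leqslant 1/\tau$) yields $\|\tau y + \1_{\delta, B}\| \leqslant \CC\|y + \1_{\varepsilon, A}\|$. Similarly, Property $(A,\tau)$ with an empty first set gives $\|\tau y\|\leqslant \CC\|y+\1_{\varepsilon, A}\|$. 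By the $p$-Banach inequality, $\|\1_{\delta, B}\|^{p}\leqslant \|\tau y + \1_{\delta, B}\|^{p}+\|\tau y\|^{p}\leqslant 2\CC^{p}\|y+\1_{\varepsilon, A}\|^{p}$. Combined with superdemocracy (Property $(A,\tau)$ with $x=0$ gives $\|\1_{\varepsilon, A}\|\leqslant \CC \|\1_{\delta, B}\|$), this produces QGLC with constant $2^{1/p}\CC^{2}$. Theorem~\ref{QGLC+nearlyunc} then makes $\BB$ nearly unconditional with some modulus $\phi$.

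For the scaling step, fix $0<t\leqslant 1$ and let $x, A, B, \varepsilon, \delta$ be as in the hypotheses of Property $(A, t)$. Put $x':=(t/\tau)x$; then $\|x'\|_{\infty}\leqslant 1/\tau$ and $\supp(x')=\supp(x)$, so Property $(A,\tau)$ gives $\|tx+\1_{\varepsilon, A}\|=\|\tau x' + \1_{\varepsilon, A}\|\leqslant \CC\|x' + \1_{\delta, B}\|$. The identity $x' + \1_{\delta, B}=(t/\tau)(x+\1_{\delta, B})+(1-t/\tau)\1_{\delta, B}$ and the $p$-Banach inequality yield $\|x'+\1_{\delta, B}\|^{p}\leqslant (t/\tau)^{p}\|x+\1_{\delta, B}\|^{p}+|1-t/\tau|^{p}\|\1_{\delta, B}\|^{p}$. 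To bound $\|\1_{\delta, B}\|$, observe that $w:=t(x+\1_{\delta, B})$ satisfies $\|w\|_{\infty}\leqslant 1$ and $\min_{n\in B}|\ee^{*}_{n}(w)|=t$; hence near unconditionality gives $\|t\1_{\delta, B}\|=\|P_{B}(w)\|\leqslant \phi(t)\|w\|=\phi(t)\,t\,\|x+\1_{\delta, B}\|$, i.e., $\|\1_{\delta, B}\|\leqslant \phi(t)\|x+\1_{\delta, B}\|$. Substituting back produces Property $(A, t)$ with explicit constant $\mathbf D(t)=\CC\bigl((t/\tau)^{p}+|1-t/\tau|^{p}\phi(t)^{p}\bigr)^{1/p}$.

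The main technical obstacle is the first step, namely extracting QGLC from Property $(A,\tau)$ without any further structural hypothesis such as quasi-greediness. The trick is to exploit the flexibility built into the definition of Property $(A,\tau)$: the $|A|\leqslant |B|$ condition with freely chosen signs lets us swap the roles of $A$ and $B$, and combining this with the trivial $A=\emptyset$ instance yields a $p$-Banach bound on $\|\1_{\delta, B}\|$ that upgrades (via superdemocracy) to QGLC. Once QGLC is in hand, Theorem~\ref{QGLC+nearlyunc} unlocks near unconditionality, and the remainder of the argument is essentially algebraic: the scaling $x\mapsto (t/\tau)x$ respects the $1/\tau$ cap on the $\infty$-norm, and the residual term $\|\1_{\delta, B}\|$ that appears from the $p$-triangle inequality is precisely what near unconditionality applied to $t(x+\1_{\delta, B})$ controls.
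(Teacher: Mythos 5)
Your proof is correct, and its backbone is the same as the paper's: show that $\BB$ is QGLC, upgrade to near unconditionality via Theorem~\ref{QGLC+nearlyunc}, and control the residual term $\|\1_{\delta,B}\|$ by applying the near-unconditionality modulus $\phi(t)$ to the vector $t(x+\1_{\delta,B})$, whose coefficients on $B$ have modulus exactly $t$. You deviate in two local steps. First, where the paper obtains QGLC by citing \cite[Proposition 5.3]{C3} (passing from Property $(A,\tau)$ to Property (A)) and then \cite[Proposition 5.3]{AABW2021}, you give a short self-contained swap argument: reverse the roles of $A$ and $B$ in the definition, combine with the instance where the first set is empty, and finish with superdemocracy. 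This is valid as the definition of Property $(A,\tau)$ places no nonemptiness restriction on the sets; if one wanted to avoid the empty-set instance altogether, applying the swapped inequality with the two signs $\pm\delta$ on $B$ gives $\|2\1_{\delta,B}\|^p\leqslant 2\CC^p\|y+\1_{\varepsilon,A}\|^p$ and serves the same purpose. Second, in the final estimate the paper splits $\|tx+\1_{\varepsilon,A}\|^p\leqslant \|tx\|^p+\|\1_{\varepsilon,A}\|^p$ and bounds the two pieces separately (using the Property (A)-type superdemocracy for the second), whereas you keep the vector intact, rescale to $x'=(t/\tau)x$ so that Property $(A,\tau)$ applies directly, and then decompose $x'+\1_{\delta,B}=(t/\tau)(x+\1_{\delta,B})+(1-t/\tau)\1_{\delta,B}$. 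Both routes yield explicit constants of the same flavour, yours $\CC\bigl((t/\tau)^p+|1-t/\tau|^p\phi^p(t)\bigr)^{1/p}$ versus the paper's $\bigl(t^p+t^p\phi^p(t)+\mathbf{C}^p\tau^{-p}\phi^p(t)\bigr)^{1/p}$; yours is somewhat more self-contained, while the paper's is shorter because it leans on the cited propositions.
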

\begin{proof}
By \cite[Proposition 5.3]{C3}, $\BB$ has $(\mathbf C/t)$-Property (A). Thus, by \cite[Proposition 5.3]{AABW2021}, it is quasi-greedy for largest coefficients. By Theorem \ref{QGLC+nearlyunc}, $\BB$ is nearly unconditional. Let $\phi: (0,1]\rightarrow [1,\infty)$ be its near unconditionality function, and fix $A, B, \varepsilon, \delta$, and $x$ as in the definition of Property $(A,t)$. We have 
\begin{align*}
\|t x\|^p\ \leqslant\  \|tx +t \1_{\delta, B}\|^p+\|t\1_{\delta, B}\|^p&\ \leqslant\ \|tx +t \1_{\delta, B}\|^p+\phi^p(t)\|t\1_{\delta, B}+t x\|^p\\
&\ =\ t^p(1+\phi^p(t))\|x+\1_{\delta,B}\|^p. 
\end{align*}
On the other hand, 
$$
\|\1_{\varepsilon, A}\|\ \leqslant\ \frac{\mathbf C}{\tau t}\|t \1_{\delta, B}\|\ \leqslant\ \frac{\mathbf C}{\tau t}\phi(t)\|t\1_{\delta, B}+t x\|=\frac{\mathbf C}{\tau}\phi(t)\|x+\1_{\delta, B}\|. 
$$
Therefore, $p$-convexity gives
$$
\|t x +\1_{\varepsilon,A}\|\ \leqslant\ \left(t^p+t^p\phi^p(t)+\frac{\mathbf C^{p}}{\tau^p}\phi^p(t)\right)^{\frac{1}{p}}\|x+\1_{\delta, B}\|,
$$
and the proof is complete.  
\end{proof}

We close this section with an observation on the uniform Property (A). We have already considered the case where the uniform bound is $1$ and the space is locally convex, but not the general case. It turns out that the uniform property (A) is equivalent to the property obtained from Property (A) by removing the restriction on the $\ell_{\infty}$-norm of $x$. More precisely, we have the following result. 
\begin{lem}\label{lemmacaracunif}Let $\BB$ be a basis of a $p$-Banach space $\XX$. The following are equivalent. 
\begin{enumerate}
\item [i)] $\BB$ has the uniform property $(A)$. 
\item[ii)] There is a constant $\mathbf C\geqslant 1$ such that
$$\|x+ \1_{\varepsilon,  A}\|\ \leqslant\ \mathbf C\|x+ \1_{\delta, B}\|,$$
for all $x\in\mathbb{X}$ and all finite sets $A, B\subset\mathbb{N}$ with $|A|\leqslant |B|$ and $\supp(x)\sqcup A\sqcup B$, and for all signs $\varepsilon, \delta$. 
\end{enumerate}
\end{lem}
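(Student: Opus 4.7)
Both directions exploit the $p$-subadditivity of the quasi-norm together with the freedom to scale $x$, though in opposite senses. The implication (ii)$\Rightarrow$(i) is a short computation: given $\tau\in(0,1]$ and $x$ with $\|x\|_{\infty}\leqslant 1/\tau$, apply (ii) with $\tau x$ in the role of $x$ (the support being unchanged) to get $\|\tau x+\1_{\varepsilon,A}\|\leqslant\mathbf C\|\tau x+\1_{\delta,B}\|$. Writing $\tau x+\1_{\delta,B}=(x+\1_{\delta,B})-(1-\tau)x$ and using the $p$-triangle inequality, together with the bound $\|x\|\leqslant\mathbf C\|x+\1_{\delta,B}\|$ obtained from (ii) with $A=\emptyset$, one bounds $\|\tau x+\1_{\delta,B}\|^p\leqslant(1+\mathbf C^p)\|x+\1_{\delta,B}\|^p$. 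This yields (i) with constant $\mathbf C(1+\mathbf C^p)^{1/p}$, independent of $\tau$.

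The substantive direction is (i)$\Rightarrow$(ii). First, by a standard density argument it suffices to treat $x$ with finite support, since any $x\in\XX$ can be approximated in norm by vectors of this form whose supports avoid $A\cup B$. The key observation, and the only essential use of uniformity in $\tau$, is this: for any such $x$, inequality (i) holds for every $\tau\in(0,\min(1,1/\|x\|_{\infty})]$ with the \emph{same} constant $\mathbf C$. Passing to the limit $\tau\to 0^+$ and using continuity of the norm produces
\begin{equation*}
\|\1_{\varepsilon,A}\|\ \leqslant\ \mathbf C\,\|x+\1_{\delta,B}\|.
\end{equation*}
The same trick, applied with an auxiliary set $B'\subset\NN$ disjoint from $\supp(x)\cup A\cup B$ with $|B'|=|B|$ (which exists because $\supp(x)$ is finite) in place of $A$, gives $\|\1_{\varepsilon',B'}\|\leqslant\mathbf C\|x+\1_{\delta,B}\|$ for any signs $\varepsilon'$. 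Combining with the superdemocracy inequality $\|\1_{\delta,B}\|\leqslant\mathbf C\|\1_{\varepsilon',B'}\|$, which is (i) specialized to $x=0$, yields $\|\1_{\delta,B}\|\leqslant\mathbf C^2\|x+\1_{\delta,B}\|$.

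With these two estimates in hand, the conclusion follows from $p$-subadditivity applied to $x=(x+\1_{\delta,B})-\1_{\delta,B}$, which gives $\|x\|^p\leqslant(1+\mathbf C^{2p})\|x+\1_{\delta,B}\|^p$; one further $p$-subadditive estimate on $x+\1_{\varepsilon,A}$ then delivers (ii) with constant $(1+\mathbf C^p+\mathbf C^{2p})^{1/p}$. The main obstacle is recognizing that one must take $\tau\to 0^+$ in (i): without uniformity in $\tau$, the right-hand side of (i) would carry a $\tau$-dependent factor that could blow up in the limit, and the argument would break down. Once this observation is made, the remainder is a routine combination of $p$-subadditivity and superdemocracy.
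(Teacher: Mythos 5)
Your proof is correct, and while it shares the paper's central device for i) $\Rightarrow$ ii) --- applying Property (A,$\tau$) to the pair $(\tau x, x)$ with the uniform constant and letting $\tau\to 0^{+}$ to obtain $\|\1_{\varepsilon,A}\|\leqslant \mathbf C\|x+\1_{\delta,B}\|$ --- you complete both implications along a different route. In i) $\Rightarrow$ ii), the paper disposes of the remaining term $\|\1_{\delta,B}\|$ via the bound $\|\1_{\delta,B}\|\leqslant \mathbf C\|\1_{\varepsilon,A}\|$, which Property (A) only furnishes when $|A|=|B|$ (for $|A|<|B|$ it gives the reverse inequality); you instead reduce to finitely supported $x$ by a (correctly justified) density argument, choose an auxiliary set $B'$ with $|B'|=|B|$ disjoint from $\supp(x)\cup A\cup B$, repeat the $\tau\to 0^{+}$ argument with $B'$ in place of $A$, and then compare the equal-cardinality sets $B$ and $B'$ by superdemocracy to get $\|\1_{\delta,B}\|\leqslant \mathbf C^{2}\|x+\1_{\delta,B}\|$. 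This costs you the density reduction (needed so that $B'$ exists) but makes the cardinality bookkeeping airtight, and it lands on the same constant $(1+\mathbf C^{p}+\mathbf C^{2p})^{1/p}$ as the paper. In ii) $\Rightarrow$ i) your argument is genuinely different and simpler: instead of the paper's two-case analysis according to whether $\|\tau x\|\geqslant 2^{1/p}\mathbf C\|\1_{\varepsilon,A}\|$ (whose first case again uses $\|\1_{\delta,B}\|\leqslant \mathbf C\|\1_{\varepsilon,A}\|$), you apply ii) to $\tau x$, write $\tau x+\1_{\delta,B}=(x+\1_{\delta,B})-(1-\tau)x$, and control $\|x\|$ by taking $A=\emptyset$ in ii) (legitimate, since ii) is stated for arbitrary finite sets), obtaining the uniform constant $\mathbf C(1+\mathbf C^{p})^{1/p}$ directly. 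Both deviations are sound, and if anything they supply justifications for steps the paper passes over quickly.
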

\begin{proof}
i) $\Longrightarrow$ ii) Let $\CC\geqslant 1$ be a uniform upper bound for the Property (A,$\tau$) constants. Assume $\|x\|_{\infty}>1$ and pick $\tau < 1/\|x\|_{\infty}$. Then 
\begin{align*}
\|\tau x+ \1_{\varepsilon,  A}\|\ \leqslant\ \mathbf C\|x+ \1_{\delta, B}\|.
\end{align*}
Letting $\tau$ tend to zero, we get $\|\1_{\varepsilon,  A}\| \leqslant \mathbf C\|x+ \1_{\delta, B}\|$. Hence, 
\begin{align*}
\| x+ \1_{\varepsilon,  A}\|^p&\ \leqslant\  \| x+ \1_{\delta,  B}\|^p+\|\1_{\varepsilon,  A}\|^p+\|\1_{\delta,  B}\|^p\\
&\ \leqslant\ \| x+ \1_{\delta,  B}\|^p+\|\1_{\varepsilon,  A}\|^p+\CC^p\|\1_{\varepsilon,  A}\|^p\\
&\ \leqslant\ (1+\CC^p+\CC^{2p})\|x+ \1_{\delta, B}\|^p.
\end{align*}
ii) $\Longrightarrow$ i). Fix $x, A, B, \varepsilon, \delta$ as in Definition~\ref{defatau}. If $\|\tau x\|\geqslant 2^{\frac{1}{p}}\CC \|\1_{\varepsilon, A}\|$, then 
$$\|\1_{\delta,B}\|^p\ \leqslant\ \CC^p \|\1_{\varepsilon, A}\|^p\ \leqslant\ \frac{\|\tau x\|^p}{2},$$ so
$$
\|x+\1_{\delta, B}\|^p\ \geqslant\  \|x\|^p-\|\1_{\delta,B}\|^p\ \geqslant\ \frac{\|x\|^p}{2}.
$$
Hence, we have
\begin{align*}
\|\tau x+\1_{\varepsilon,  A}\|^p&\ \leqslant\  \|\1_{\varepsilon,  A}\|^p+\|\tau x\|^p\\
&\ \leqslant\ \tau^p\left(1+\frac{1}{2\CC^p}\right)\|x\|^p\ \leqslant\  2\tau^p\left(1+\frac{1}{2\CC^p}\right)\|x+\1_{\delta,B}\|^p.
\end{align*}
On the other hand, if $\|\tau x\|\leqslant 2^{\frac{1}{p}}\CC \|\1_{\varepsilon, A}\|$, then 
\begin{align*}
\|\tau x+\1_{\varepsilon,  A}\|^p&\ \leqslant\ \|\1_{\varepsilon,  A}\|^p+\|\tau x\|^p \leqslant \left(2\CC^p+1\right)\|\1_{\varepsilon,  A}\|^p\\
&\ \leqslant\ \left(2\CC^p+1\right)2^{1-p}\max\{\|x+\1_{\varepsilon,  A}\|^p,\|x-\1_{\varepsilon,  A}\|^p\}\\
&\ \leqslant\ \left(2\CC^p+1\right)2^{1-p}\CC^p\|x+\1_{\delta,B}\|^p.
\end{align*}
Comparing the upper bounds and combining, we get 
$$
\|\tau x+\1_{\varepsilon,  A}\|\ \leqslant\ \left(2\CC^p+1\right)^{\frac{1}{p}}2^{\frac{1}{p}-1}\CC\|x+\1_{\delta,B}\|.
$$

\end{proof}

\section{On squeeze symmetric bases}\label{sectionsqueezesymmetric}

In Definition~\ref{definitionpoly}, we restricted the approximants on the right-hand side of the inequality to vectors with constant coefficients with respect to a basis, and Theorem~\ref{m1} shows that the resulting definition is equivalent to Definition~\ref{definitionconsecutivegreedy}. This sort of approximation was studied previously in \cite{BBC, BB2017}, which give characterizations of familiar greedy-like properties. In light of those results, we ask what happens if we restrict our approximations to such vectors on both sides of our inequalities.  Do we still get a familiar greedy-like property? 
More precisely, we study some conditions under which there is a constant $\CC>0$ such that 
\begin{equation}\label{ef1}
\|x-\lambda \1_{A}\|\ \leqslant\ \CC\|x-\lambda \1_{B}\|
\end{equation}
for each $x\in \XX$, $A\in G(x)$ and certain sets $B$ (to be specified latter).
It turns out that, for Schauder bases (or Markushevich bases in locally convex spaces), we can use \eqref{ef1} to characterize squeeze symmetric bases, which are sandwiched between symmetric bases with equivalent fundamental functions (see \cite{AABBL2021, AAB2021, AABW2021}). For our purposes, it is convenient to use the following equivalence, which follows from  \cite[Proposition 9.4, Theorem 9.14]{AABW2021}.
\begin{lem}\label{lemma: sqscar}A basis $\BB$ for a $p$-Banach space $\XX$ is squeeze symmetric if and only if there is a $\CC>0$ such that 
\begin{equation}
\|\1_{\varepsilon, A}\|\ \leqslant\ \CC\|x\|\label{lemma: sqscar2}
\end{equation}
for all finite $A\subset \NN$ finite, all $\varepsilon\in \E$, and all $x\in \XX$ such that 
$$\left|\{n\in \NN: |\ee_n^*(x)|\geqslant 1\}\right|\ \geqslant\ |A|.$$ 
The smallest $\CC$ for which \eqref{lemma: sqscar2} holds is denoted $\CC_{sqs}$. 
\end{lem}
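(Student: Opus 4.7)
The plan is to extract this equivalence from the characterization in \cite[Proposition 9.4, Theorem 9.14]{AABW2021}, which identifies squeeze symmetric bases (in a $p$-Banach space) as precisely those bases that are both superdemocratic and truncation quasi-greedy. Under this identification, proving the lemma amounts to showing that the displayed inequality in \eqref{lemma: sqscar2} is equivalent to the conjunction of superdemocracy and truncation quasi-greediness.

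For the forward direction, suppose $\BB$ is squeeze symmetric, and let $\Delta_{sd}$, $\CC_{tq}$ be respectively its superdemocracy and truncation quasi-greedy constants. Given $x\in \XX$ and a finite set $A\subset \NN$ with $|\{n: |\ee_n^*(x)|\geqslant 1\}|\geqslant |A|$, choose a greedy set $A'$ of $x$ of order $|A|$. Since there are at least $|A|$ indices at which $|\ee_n^*(x)|\geqslant 1$, the top $|A|$ coefficients must all have modulus at least $1$; hence $\min_{n\in A'}|\ee_n^*(x)|\geqslant 1$. Truncation quasi-greediness and superdemocracy then give
$$
\|\1_{\varepsilon, A}\|\ \leqslant\ \Delta_{sd}\|\1_{\varepsilon(x), A'}\|\ \leqslant\ \Delta_{sd}\min_{n\in A'}|\ee_n^*(x)|\,\|\1_{\varepsilon(x), A'}\|\ \leqslant\ \Delta_{sd}\CC_{tq}\|x\|.
$$

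For the converse, assume \eqref{lemma: sqscar2} holds with constant $\CC$, and derive each of the two required properties. Superdemocracy follows by specializing: for finite $A,B\subset \NN$ with $|A|\leqslant |B|$ and signs $\varepsilon,\delta\in \E$, take $x=\1_{\delta, B}$; then $\{n:|\ee_n^*(x)|\geqslant 1\}=B$, so \eqref{lemma: sqscar2} yields $\|\1_{\varepsilon, A}\|\leqslant \CC\|\1_{\delta, B}\|$. Truncation quasi-greediness is obtained by rescaling: for $x\in \XX$ and any greedy set $A\in G(x)$, set $t:=\min_{n\in A}|\ee_n^*(x)|$ and apply \eqref{lemma: sqscar2} to $t^{-1}x$ together with the set $A$, noting that $\{n:|\ee_n^*(t^{-1}x)|\geqslant 1\}\supseteq A$. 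This gives
$$
\|\1_{\varepsilon(x), A}\|\ \leqslant\ \CC\|t^{-1}x\|\ =\ \CC\, t^{-1}\|x\|,
$$
i.e.\ $t\|\1_{\varepsilon(x), A}\|\leqslant \CC\|x\|$, which is precisely truncation quasi-greediness. Invoking \cite[Proposition 9.4, Theorem 9.14]{AABW2021} in the converse direction then yields squeeze symmetry.

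The main obstacle is essentially bookkeeping: matching our hypothesis \eqref{lemma: sqscar2} to the exact form of the characterization in \cite{AABW2021}, and in particular realizing that the ``level set of size at least $|A|$ at height $1$'' condition is precisely what simultaneously encodes the truncation property (via rescaling by $t^{-1}$) and superdemocracy (via $x=\1_{\delta, B}$). Quantitative bookkeeping of the constants is straightforward but not needed here, since the lemma statement only asserts the existence of a constant $\CC_{sqs}$.
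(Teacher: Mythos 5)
Your proof is correct and follows essentially the same route as the paper, which gives no argument beyond citing \cite[Proposition 9.4, Theorem 9.14]{AABW2021}: you simply flesh out that citation by passing through the characterization of squeeze symmetric bases as truncation quasi-greedy plus superdemocratic and verifying the routine equivalence with \eqref{lemma: sqscar2}. The details check out (in particular the observation that a greedy set of order $|A|$ must have all coefficients of modulus at least $1$, and the rescaling by $t^{-1}$ for the truncation property), so there is nothing to add.
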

To prove our result, we need the following lemma, which follows from \cite[Proposition 3.11]{BL2021}.  

\begin{lem}\label{lemma separation}Let $\BB$ be a Markushevich basis of a Banach space $\XX$. There is an $\M>0$ such that, for every finite set $F\subset \NN$ and each $m\in\NN$, there is $E\subset \NN$ such that $E>F$, $|E|=m$, and
\begin{equation}
\|x\|\ \leqslant\ \M\|x+y\|, \forall x\in [\ee_n: n\in F], \forall y\in [\ee_n: n\in E].\label{lemma separation: sep}
\end{equation}
\end{lem}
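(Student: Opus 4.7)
The plan is to combine the $w^*$-density of $\spann\{\ee_n^*:n\in\NN\}$ in $\XX^*$ (condition (4) of a Markushevich basis) with a compactness argument on the finite-dimensional subspace $V:=\spann\{\ee_n:n\in F\}$. A preliminary step, which I would establish via the bipolar theorem, is to upgrade the bare $w^*$-density to one with norm control: since $\spann\{\ee_n^*\}$ is norming on $\XX$ with some constant $c>0$ depending only on $\BB$, the unit ball of $\spann\{\ee_n^*\}$ is $w^*$-dense in $c^{-1}B_{\XX^*}$. Concretely, for every $\tilde\phi\in B_{\XX^*}$, every finite set $K\subset\XX$, and every $\eta>0$, there exists $f\in\spann\{\ee_n^*\}$ with $\|f\|\leqslant c$ and $|f(z)-\tilde\phi(z)|<\eta$ for all $z\in K$.

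With this density-with-norm-control in hand, fix $\delta:=1/(2(1+c))$ and choose a finite $\delta$-net $\{x_1,\ldots,x_k\}$ of the unit sphere $S_V$ of $V$ (possible by compactness, since $\dim V<\infty$). For each $i$ use Hahn-Banach to pick $\tilde\phi_i\in B_{\XX^*}$ with $\tilde\phi_i(x_i)=1$, and then apply the density claim to the finite set $\{x_1,\ldots,x_k\}\subset\XX$ to obtain $f_i\in\spann\{\ee_n^*\}$ with $\|f_i\|\leqslant c$ and $|f_i(x_j)-\tilde\phi_i(x_j)|<\delta$ for every $j\in\{1,\ldots,k\}$; in particular, $|f_i(x_i)|>1-\delta$. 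The key observation is that the set $L:=\bigcup_{i=1}^k\supp(f_i)$ is then finite, so we may choose $E\subset\NN$ with $|E|=m$, $E>F$, and $E\cap L=\emptyset$.

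To verify \eqref{lemma separation: sep}, take $x\in S_V$ and $y\in[\ee_n:n\in E]$, and pick $i$ with $\|x-x_i\|<\delta$. Since $\supp(f_i)\cap\supp(y)\subset L\cap E=\emptyset$, we have $f_i(y)=0$, and hence
$$
\|x+y\|\ \geqslant\ \frac{|f_i(x+y)|}{\|f_i\|}\ =\ \frac{|f_i(x)|}{\|f_i\|}\ \geqslant\ \frac{|f_i(x_i)|-\|f_i\|\|x-x_i\|}{\|f_i\|}\ >\ \frac{1-(1+c)\delta}{c}\ =\ \frac{1}{2c}.
$$
Homogeneity upgrades this to $\|x\|\leqslant 2c\|x+y\|$ for every $x\in V$ and every $y\in[\ee_n:n\in E]$, so $\M:=2c$ works uniformly in $F$ and $m$. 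The main obstacle is the preliminary norming/density step: without norm-controlled approximants, $\|f_i\|$ could blow up and the lower bound on $\|x+y\|$ would collapse; once the bounded-norm approximants are available, the remainder is the routine interplay between compactness of the finite-dimensional sphere $S_V$ and the disjoint-support trick that kills $f_i(y)$.
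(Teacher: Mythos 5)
The compactness/net part of your argument and the disjoint-support trick are fine, but the ``preliminary step'' on which everything rests is a genuine gap. Condition (4) of a Markushevich basis only says that $\spann\{\ee_n^*:n\in\NN\}$ is $w^*$-dense in $\XX^*$, i.e.\ that the coordinate functionals are \emph{total}; it does not give any norm control on the approximating functionals. By the bipolar theorem, the statement you want --- that every $\tilde\phi\in B_{\XX^*}$ can be approximated $w^*$ on finite sets by elements of $\spann\{\ee_n^*\}$ of norm at most a constant depending only on $\BB$ --- is \emph{equivalent} to $\spann\{\ee_n^*\}$ being a norming subspace of $\XX^*$, and totality is strictly weaker than norming: there exist seminormalized Markushevich bases (in any separable space that is not quasi-reflexive) whose biorthogonal functionals span a total but non-norming subspace. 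So the bipolar theorem cannot ``upgrade'' the $w^*$-density as claimed, and without the uniform bound on $\|f_i\|$ your lower estimate for $\|x+y\|$ degenerates, leaving a constant $\M$ that depends on $F$ rather than only on $\BB$.

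What your argument does prove is the lemma for \emph{norming} Markushevich bases, with $\M$ comparable to the norming constant; in particular it recovers the (easy) Schauder case, where one can simply take any $E>F$ and use $x=S_{\max F}(x+y)$ to get $\M=\KK_b$. The lemma as stated, however, is for arbitrary Markushevich bases, and the paper does not reprove it: it is quoted from \cite[Proposition 3.11]{BL2021}, whose argument must (and does) get the uniform constant without assuming that the finitely supported functionals almost norm the finite-dimensional spaces $[\ee_n:n\in F]$ with a universal constant. To repair your proof you would either have to add the norming hypothesis (weakening the statement) or replace the Hahn--Banach-plus-bounded-$w^*$-approximation step by an argument, as in \cite{BL2021}, that selects the set $E$ using only totality and seminormalization.
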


Now we can give a characterization of squeeze symmetric Schauder bases in terms of approximations by one-dimensional subspaces. Also, for $p=1$, we give a characterization valid for Markushevich bases as well.

\begin{prop}\label{proposition: squeeze symmetric}Let $\BB$ be a basis of a $p$-Banach space $\XX$. Consider the following statements.
\begin{enumerate}
\item[i)] $\BB$ is squeeze symmetric. 
\item[ii)] There is $\CC>0$ such that 
\begin{align*}
&\|x+\lambda \1_{\varepsilon, A}\|\ \leqslant\  \CC\|x+\lambda \1_{\delta, B}\|,\\
& \forall x\in \XX, A\in G(x), B\subset \NN : |B|\geqslant |A|, |B\cap \supp(x)|\leqslant |A|,\\
&\forall\varepsilon, \delta\in \E:\varepsilon_n=\delta_n, \forall n\in A\cap B,\\
&\forall \lambda \in \FF. 
\end{align*}
\item[iii)] There is $\CC>0$ such that, 
\begin{align}
&\|x- \1_{A}\|\ \leqslant\ \CC\|x-\1_{\varepsilon, B}\|\nonumber\\
&\forall x\in \XX, A\in G(x), B\subset \supp(x), |B|= |A|,  A\cap B=\emptyset, \varepsilon\in \E. \nonumber
\end{align}
\end{enumerate}
The following implications hold: i) $\Longrightarrow$ ii) $\Longrightarrow$ iii). Moreover, if $p=1$ and $\BB$ is a Markushevich basis, or $0<p\leqslant 1$ and $\BB$ is a Schauder basis, then  iii) $\Longrightarrow$  i).
\end{prop}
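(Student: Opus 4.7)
The plan is to prove the three implications in order.

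The implication ii) $\Longrightarrow$ iii) is immediate by specialization: given $x, A, B, \varepsilon$ as in iii), apply ii) with $\lambda=-1$, constant sign $\mathbf{1}$ on $A$, and sign $\varepsilon$ on $B$. The hypotheses $|B|=|A|$, $|B\cap\supp(x)|=|B|=|A|$ (since $B\subseteq\supp(x)$), and the vacuous sign-matching on $A\cap B=\emptyset$ are all satisfied, so ii) delivers exactly the inequality in iii).

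For i) $\Longrightarrow$ ii), fix the data and set $y:=x+\lambda\1_{\delta,B}$. Since $\varepsilon_n=\delta_n$ on $A\cap B$, the identity $x+\lambda\1_{\varepsilon,A}=y+\lambda\1_{\varepsilon,A\setminus B}-\lambda\1_{\delta,B\setminus A}$ combined with $p$-subadditivity yields
$$\|x+\lambda\1_{\varepsilon,A}\|^p \leqslant \|y\|^p + |\lambda|^p\|\1_{\varepsilon,A\setminus B}\|^p + |\lambda|^p\|\1_{\delta,B\setminus A}\|^p,$$
so it suffices to bound each tail by a constant multiple of $\|y\|^p$ using Lemma~\ref{lemma: sqscar}. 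The crucial counting is that $y$ has at least $|B|-|B\cap\supp(x)|\geqslant|B|-|A|$ coordinates of modulus exactly $|\lambda|$ (those in $B\setminus\supp(x)$), and, by the greedy property of $A$ in $x$, each $n\in A\setminus B$ with $|\ee_n^*(x)|\geqslant|\lambda|$ contributes a further coordinate of $y$ of modulus at least $|\lambda|$. Combining this with the hypothesis $|B\cap\supp(x)|\leqslant|A|$ and the elementary bound $|B\setminus A|\geqslant|A\setminus B|$, the count dominates both $|A\setminus B|$ and $|B\setminus A|$ whenever $|\lambda|\leqslant\min_{n\in A}|\ee_n^*(x)|$, and Lemma~\ref{lemma: sqscar} concludes. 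In the complementary large-$|\lambda|$ regime the contribution from $A\setminus B$ may collapse; the plan there is to use that squeeze-symmetric bases are truncation quasi-greedy (via \cite[Theorem 9.14]{AABW2021}), applied to a suitable greedy set of $y$ built from $B\setminus\supp(x)$ together with coordinates of $y$ in $(A\cup B)\cap\supp(x)$ of large enough modulus, and then to transfer the estimate to $\1_{\varepsilon,A\setminus B}$ and $\1_{\delta,B\setminus A}$ via superdemocracy.

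For the converse iii) $\Longrightarrow$ i) under the Schauder (or $p=1$ Markushevich) hypothesis, the goal is to verify the criterion of Lemma~\ref{lemma: sqscar}: given $z\in\XX$, a finite set $A\subset\NN$ and signs $\varepsilon$ with $|\{n:|\ee_n^*(z)|\geqslant 1\}|\geqslant|A|$, show $\|\1_{\varepsilon,A}\|\leqslant\CC\|z\|$. The approach is to pick $F\subseteq\{n:|\ee_n^*(z)|\geqslant 1\}$ of cardinality $|A|$, then shift $A$ into a tail well-separated from $\supp(z)\cup F$; Lemma~\ref{lemma separation} supplies this shift with multiplicative error $\M$ in the Markushevich $p=1$ case, and basis-constant tail truncation via $\KK_b$ plays the analogous role in the Schauder setting. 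One then builds an auxiliary vector $\tilde z$ for which the shifted $A'$ is a greedy set of $\tilde z$ disjoint from $F\subseteq\supp(\tilde z)$, applies iii) with the pair $(A',F)$ to obtain $\|\tilde z-\1_{A'}\|\leqslant\CC\|\tilde z-\1_{\varepsilon(z),F}\|$, and unwinds the construction (bounding $\|\1_{\varepsilon,A}\|$ below by a fraction of $\|\tilde z-\1_{A'}\|$ via $\M$ or $\KK_b$, and $\|\tilde z-\1_{\varepsilon(z),F}\|$ above by a multiple of $\|z\|$) to recover the desired inequality. I expect this converse to be the main obstacle: the delicate point is designing the shift and auxiliary vector so that every intervening constant stays uniform in $A$, $F$, and $z$, which is presumably why the extra Schauder/Markushevich hypothesis is needed.
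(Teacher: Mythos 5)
Your reduction ii) $\Rightarrow$ iii) is correct, and the small-$|\lambda|$ half of your i) $\Rightarrow$ ii) argument is sound: the count $|A\setminus B|+|B\setminus\supp(x)|\geqslant\max\{|A\setminus B|,|B\setminus A|\}$ is exactly the right use of the hypothesis $|B\cap\supp(x)|\leqslant|A|$, and Lemma~\ref{lemma: sqscar} then handles both tails. For the large-$|\lambda|$ regime, however, you only offer a plan (truncation quasi-greediness plus superdemocracy of squeeze symmetric bases); it can be pushed through, but it is unnecessary machinery and as written is not a proof. The simpler fix, which is what the paper does, is to work at threshold $|\lambda|/2$ and split at $|\lambda|>2a$ with $a:=\min_{n\in A}|\ee_n^*(x)|$: when $|\lambda|>2a$, every $n\in B\setminus A$ satisfies $|\ee_n^*(x+\lambda\1_{\delta,B})|\geqslant|\lambda|-a>|\lambda|/2$ and $|B\setminus A|\geqslant|A\setminus B|$, so Lemma~\ref{lemma: sqscar} at level $|\lambda|/2$ again bounds both $\|\lambda\1_{\varepsilon,A\setminus B}\|$ and $\|\lambda\1_{\delta,B\setminus A}\|$; when $|\lambda|\leqslant 2a$ your original count already works at level $|\lambda|/2$.

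The genuine gap is in iii) $\Rightarrow$ i). You propose a single application of iii) to an auxiliary vector $\tilde z$ in which the shifted copy $A'$ is the greedy set and $F\subset\{n:|\ee_n^*(z)|\geqslant1\}$ is the comparison set, hoping that $\|\1_{\varepsilon,A}\|\lesssim\|\tilde z-\1_{A'}\|$ (via $\KK_b$ or $\M$) while $\|\tilde z-\1_{\varepsilon(z),F}\|\lesssim\|z\|$. These two requirements are incompatible as stated. Since the left-hand side of iii) carries constant coefficients, the arbitrary sign $\varepsilon$ can only enter if $\tilde z-\1_{A'}$ essentially contains $\1_{\varepsilon,A}$ as the part recovered by an initial-segment projection; but then $\tilde z-\1_{\varepsilon(z),F}$ also contains $\1_{\varepsilon,A}$, and bounding its norm by a multiple of $\|z\|$ presupposes exactly the inequality $\|\1_{\varepsilon,A}\|\lesssim\|z\|$ you are trying to prove. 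If instead you build $\tilde z$ from $z$ itself, say $\tilde z=z+t\1_{A'}$, then forcing $A'$ to be a greedy set requires $t\geqslant\|z\|_\infty$, and the right-hand side $\|\tilde z-\1_{\varepsilon(z),F}\|$ retains the uncontrolled block $t\1_{A'}$. The missing idea is a \emph{two-step} transfer through a neutral far block $E>\supp(z)\cup A$ with $|E|=|A|$: first apply iii) to $\1_{\varepsilon,A}+\1_{E}$ with greedy set $E$ and comparison set $A$ (sign $\varepsilon$) to get $\|\1_{\varepsilon,A}\|\leqslant\CC\|\1_{E}\|$; then apply iii) to $z+\1_{E}$ with greedy set $B\in G(z,|E|,1)$ (the large coordinates of $z$ — note the roles of far block and large coordinates are the reverse of your sketch) and comparison set $E$, using the Schauder constant or Lemma~\ref{lemma separation} to write $\|\1_{E}\|^p\leqslant\|z-\1_{B}+\1_{E}\|^p+\|z-\1_{B}\|^p\leqslant(1+\M^p)\|z+\1_{E}-\1_{B}\|^p\leqslant\CC^p(1+\M^p)\|z\|^p$. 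This second step is where the Schauder/Markushevich hypothesis genuinely enters; without the intermediate $\1_E$ the argument does not close.
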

\begin{proof}
i) $\Longrightarrow$ ii): set $D:=A\cap B$, $A_1:=A\setminus B$, $B_1:=B\setminus A$, $y:=x+\lambda \1_{\varepsilon, D}$, and $a:=\min_{n\in A}|\ee_n^*(x)|$. We shall show that
\begin{equation}
\|y+\lambda \1_{\varepsilon, A_1}\|\ \leqslant\  (1+2^{p+1}\CC^p_{sqs})^{\frac{1}{p}}\|y+\lambda \1_{\delta, B_1}\|. \label{proposition: squeeze symmetric: to prove (1)}
\end{equation}
To that end, we assume $\lambda\neq0$, and consider two following cases.

Case 1: if $|\lambda|>2a$, then $$|\ee_n^*(y+\lambda \1_{\delta, B_1})|\ \geqslant\ |\lambda|-a\ >\  \frac{|\lambda|}{2}, \forall n\in B_1.$$ 
Hence, 
$$
\left| \left\lbrace n\in \NN: |\ee_n^*(y+\lambda \1_{\delta, B_1})|\geqslant |\lambda|/2\right\rbrace\right| \ \geqslant\ |B_1|\ \geqslant\ |A_1|.$$
It follows that 
\begin{align*}
\|y+\lambda \1_{\varepsilon, A_1}\|^p&\ \leqslant\ \|y+\lambda \1_{\delta, B_1}\|^p+2^p \|2^{-1}\lambda \1_{\varepsilon, A_1}\|^p+2^p\|2^{-1}\lambda \1_{\delta, B_1}\|^p\\
&\ \leqslant\ (1+2^{p+1}\CC^p_{sqs})\|y+\lambda \1_{\delta, B_1}\|^p\mbox{ by squeeze symmetry}.
\end{align*}

Case 2:  if $0<|\lambda|\leqslant 2a$, then  $$|\ee_n^*(y+\lambda \1_{\delta, B_1})|\ \geqslant\ |\lambda|/2, \forall n\in A_1\sqcup (B_1\setminus \supp(x)).$$ Also, since $|\lambda| > 0$, we have $A\subset \supp(x)$, which,  combined with the hypothesis $|\supp(x)\cap B|\leqslant |A|$, implies that $|B_1\cap \supp(x)|\leqslant |A_1|$. Hence, 
\begin{align*}
\left| \left\lbrace n\in \NN: |\ee_n^*(y+\lambda \1_{\delta, B_1})|\ \geqslant\ |\lambda|/2\right\rbrace\right|&\ \geqslant\
|A_1\sqcup (B_1\setminus \supp(x))|\\
&\ =\ |A_1|+|B_1\setminus \supp(x)|\ \geqslant\ |B_1|\ \geqslant\ |A_1|.
\end{align*}
As in Case 1, we obtain 
$$
\|y+\lambda \1_{\varepsilon, A_1}\|^p\ \leqslant\ (1+2^{p+1}\CC^p_{sqs})\|y+\lambda \1_{\delta, B_1}\|^p.
$$
We have proved iii).

ii) $\Longrightarrow$ iii) is immediate. 

iii) $\Longrightarrow$ i): we borrow the argument from the proofs of \cite[Theorem 1.10]{B2019} and \cite[Theorem 4.2]{BL2021}. Let $\M$ be the basis constant if $\BB$ is a Schauder basis or the constant in Lemma~\ref{lemma separation} if $p=1$ and $\BB$ is a Markushevich basis. Fix $x\in \XX$, $A\neq\emptyset$, and $\varepsilon$ as in Lemma~\ref{lemma: sqscar}.  By density and a standard perturbation argument, we may assume that $x$ has a finite support. Choose $E>\supp(x)\cup A$ with $|E|=|A|$ so that \eqref{lemma separation: sep} holds. Given that $E\in G(\1_{\varepsilon,A}+\1_{E},|A|,1)$, we have 
\begin{equation}\label{e101}
\|\1_{\varepsilon,A}\|\ =\ \|(\1_{\varepsilon,A}+\1_{E})-\1_{E} \|\ \leqslant\ \CC \|(\1_{\varepsilon,A}+\1_{E})-\1_{\varepsilon,A}\|\ =\ \CC\|\1_{E}\|.
\end{equation}
Now pick $B\in G(x, |E|, 1)$. Since $B\in G(x+\1_{E}, |E|, 1)$ and $B\cap E=\emptyset$, 
\begin{align}\label{e102}
\|\1_{E}\|^p&\ \leqslant\ \|x-\1_{B}+\1_{E}\|^p+\|x-\1_{B}\|^p\ \leqslant\ (1+\M^p)\|x+\1_{E}-\1_{B}\|^p\nonumber\\
&\ \leqslant\ \CC^p(1+\M^p)\|x+\1_{E}-\1_{E}\|^p\ =\ \CC^p(1+\M^p)\|x\|^p. 
\end{align} 
Combining \eqref{e101} and \eqref{e102} gives
$$
\|\1_{\varepsilon,A}\|\ \leqslant\ \CC^2(1+\M^p)^{\frac{1}{p}}\|x\|. 
$$
This completes our proof.
\end{proof}

\begin{cor}\label{corollaryforsemi}If $\BB$ is a squeeze symmetric basis, there is $\CC>0$ such that 
\begin{equation}
\min_{\substack{B\subset A\\\lambda_1\in \FF}}\|x-\lambda_1 \1_{B}\|\ \leqslant\ \CC\inf_{\substack{D\in \NN^{<\infty}\\|D\cap \supp(x)|\leqslant |A|\\\lambda_2\in \FF }}\|x-\lambda_2\1_{D}\|, \forall x\in \XX,  \forall A\in G(x). \label{corollaryforsemi:min}
\end{equation}
\end{cor}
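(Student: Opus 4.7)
The plan is to exhibit, for every admissible candidate $(D,\lambda_2)$ on the right-hand side of \eqref{corollaryforsemi:min}, an explicit admissible pair $(B,\lambda_1)$ on the left that beats it up to a constant depending only on the squeeze-symmetry constant $\CC_{sqs}$ from Lemma~\ref{lemma: sqscar}. The choice I will make is $B:=D\cap A\subseteq A$ and $\lambda_1:=\lambda_2$. Writing $v:=x-\lambda_2\1_D$, the identity
\[
x-\lambda_2\1_{D\cap A}\ =\ v+\lambda_2\1_{D\setminus A},
\]
combined with the $p$-triangle inequality, reduces the problem to the single estimate
\[
|\lambda_2|\,\|\1_{D\setminus A}\|\ \leqslant\ 2\,\CC_{sqs}\,\|v\|.
\]

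To establish this, I will invoke Lemma~\ref{lemma: sqscar} applied to the rescaled vector $(2/|\lambda_2|)v$: it is enough to produce at least $|D\setminus A|$ coordinates at which $v$ has modulus $\geqslant|\lambda_2|/2$. Since $A\subseteq\supp(x)$, I will use the disjoint decomposition
\[
D\setminus A\ =\ \bigl(D\setminus\supp(x)\bigr)\sqcup\bigl((D\cap\supp(x))\setminus A\bigr).
\]
On $D\setminus\supp(x)$ the coefficients of $v$ equal $-\lambda_2$, so they contribute $|D\setminus\supp(x)|$ large coordinates for free. The main obstacle is to account for the remaining $|(D\cap\supp(x))\setminus A|$ large coordinates, and the right donor depends on how big $|\lambda_2|$ is compared with $a:=\min_{n\in A}|\ee_n^*(x)|$.

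The hard part is the ensuing case split on the size of $|\lambda_2|$ relative to $a$. If $|\lambda_2|\leqslant 2a$, I will draw the missing large coordinates from $A\setminus D$, where $|\ee_n^*(v)|=|\ee_n^*(x)|\geqslant a\geqslant |\lambda_2|/2$; the hypothesis $|D\cap\supp(x)|\leqslant|A|$ then yields the elementary count $|A\setminus D|+|D\setminus\supp(x)|\geqslant|D\setminus A|$. If instead $|\lambda_2|>2a$, I will use the set $(D\cap\supp(x))\setminus A$ itself: for such $n$, $|\ee_n^*(x)|\leqslant a<|\lambda_2|/2$, which forces $|\ee_n^*(v)|=|\ee_n^*(x)-\lambda_2|\geqslant |\lambda_2|/2$, and this, together with $D\setminus\supp(x)$, exactly covers $D\setminus A$.

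In either case, Lemma~\ref{lemma: sqscar} gives the target estimate, and the $p$-triangle inequality then yields
\[
\|x-\lambda_2\1_{D\cap A}\|\ \leqslant\ \bigl(1+(2\,\CC_{sqs})^p\bigr)^{1/p}\|v\|.
\]
Since $(D\cap A,\lambda_2)$ is admissible for the minimum on the left-hand side of \eqref{corollaryforsemi:min}, taking the infimum on the right-hand side over admissible $(D,\lambda_2)$ will finish the proof with $\CC=(1+(2\,\CC_{sqs})^p)^{1/p}$. The degenerate cases $A=\emptyset$ or $\lambda_2=0$ reduce to trivial observations (e.g.\ taking $B=\emptyset$, $\lambda_1=0$).
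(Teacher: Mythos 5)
Your proof is correct, but it takes a genuinely different route from the paper's. The paper deduces the corollary in two lines from Proposition~\ref{proposition: squeeze symmetric}~ii): if $|D|\geqslant |A|$ it applies that item directly to the pair $(A,D)$ with $\lambda=-\lambda_2$, and if $|D|<|A|$ it first passes to a greedy subset $A_1\subset A$ with $|A_1|=|D|$; squeeze symmetry enters only through the already-proved implication i) $\Rightarrow$ ii). You instead bypass the proposition and argue directly from Lemma~\ref{lemma: sqscar}, taking $B=D\cap A$ and $\lambda_1=\lambda_2$, which removes the $|D|$ versus $|A|$ dichotomy altogether; the price is that you essentially redo, inside the corollary, the same case split ($|\lambda_2|>2a$ versus $|\lambda_2|\leqslant 2a$, with $a=\min_{n\in A}|\ee_n^*(x)|$) that the paper performs once in proving the proposition. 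Your counting is sound: with $v=x-\lambda_2\1_D$, in the case $0<|\lambda_2|\leqslant 2a$ the disjoint sets $A\setminus D$ and $D\setminus\supp(x)$ carry coordinates of $v$ of modulus at least $|\lambda_2|/2$ and $|A\setminus D|+|D\setminus\supp(x)|\geqslant (|A|-|A\cap D|)+(|D|-|D\cap\supp(x)|)\geqslant |D|-|A\cap D|=|D\setminus A|$ by the hypothesis $|D\cap\supp(x)|\leqslant |A|$, while for $|\lambda_2|>2a$ every $n\in D\setminus A$ already satisfies $|\ee_n^*(v)|\geqslant |\lambda_2|-a>|\lambda_2|/2$; Lemma~\ref{lemma: sqscar} applied to $(2/|\lambda_2|)v$ then gives $|\lambda_2|\,\|\1_{D\setminus A}\|\leqslant 2\CC_{sqs}\|v\|$, and the $p$-triangle inequality finishes. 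What your route buys is a self-contained argument with the explicit constant $(1+(2\CC_{sqs})^p)^{1/p}$, sharper than what one obtains by tracking constants through Proposition~\ref{proposition: squeeze symmetric}; what the paper's route buys is brevity, since the proposition is proved anyway. One line to patch: you assert $A\subseteq\supp(x)$ without justification, and this can fail (e.g.\ when $|A|>|\supp(x)|$, a greedy set may contain zero coordinates). It is harmless: in the only place you use it, namely when drawing large coordinates from $A\setminus D$ in the case $0<|\lambda_2|\leqslant 2a$, one has $a\geqslant |\lambda_2|/2>0$, which forces $A\subseteq\supp(x)$; in the case $|\lambda_2|>2a$ the exact decomposition of $D\setminus A$ is not needed, since all its elements are large coordinates of $v$ regardless. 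Adding that remark makes the argument complete.
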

\begin{proof}
Let $\CC$ be the constant of Proposition \ref{proposition: squeeze symmetric} item ii). Fix $x\in \XX$, $A\in G(x)$, $\lambda_2 \in \FF$ and a nonempty finite set $D\subset \NN$ such that $|D\cap \supp(x)|\leqslant |A|$.  

If $|D|\geqslant |A|$, then $\|x-\lambda_2\1_{A}\|\leqslant \CC\|x-\lambda_2\1_{D}\|$. 

If $|D|<|A|$, choosing $A_1\subset A$ so that $A_1\in G(x,|D|,1)$, we get $\|x-\lambda_2\1_{A_1}\|\leqslant \CC\|x-\lambda_2\1_{D}\|$. 

Since $A$ is a finite set, the minimum in \eqref{corollaryforsemi:min} is achieved, and the proof is complete. 
\end{proof}
\section{Appendix: approximation by pseudo-greedy sets}

So far, we have consider ``consecutive'' approximations in which we use approximations by projections on intervals (\cite{BBC}) or vectors supported on intervals (Definition~\ref{definitionconsecutivegreedy}.) One might wonder what happens if, instead, we replace greedy sets for some kind of ``consecutive'' variant. In this appendix, we consider a natural weakening of the greedy condition on sets.  
\begin{defi}\label{definition pseudo greedy}Let $\BB$ be a basis of a $p$-Banach space $\XX$ and $x\in \XX$. A set $A\subset \NN$ is a \rm{pseudo-greedy} set of $x$ if, for every $n\in \NN\setminus A$, either 
\begin{align*}
|\ee_n^*(x)|\ \geqslant\  \sup_{k\in A}|\ee_k^*(x)|,
\end{align*}
or 
\begin{align*}
|\ee_n^*(x)|\ \leqslant\ \inf_{k\in A}|\ee_k^*(x)|.
\end{align*}
\end{defi}
It is not hard to show that a set is pseudo-greedy if and only if it is the difference of two greedy sets.
\begin{lem}\label{lemmapseudogreedy}Let $\BB$ be a basis of a $p$-Banach space $\XX$ and $\CC>0$. The following hold
\begin{enumerate}
\item[i)] Suppose that for every $x\in \XX$ and $m\in \NN$, there is a pseudo-greedy set $A$ of $x$ such that $|A| = m$ and
\begin{align*}
\|x-P_A(x)\|\ \leqslant\ \CC\sigma_m(x). 
\end{align*}
Then $\BB$ is $\CC$-greedy. 
\item[ii)] Suppose that for every $x\in \XX$ and $m\in \NN$, there is a pseudo-greedy set $A$ of $x$ such that $|A| = m$ and 
\begin{align*}
\|x-P_A(x)\|\ \leqslant\ \CC\widetilde{\sigma}_m(x). 
\end{align*}
Then $\BB$ is $\CC$-almost greedy.
\item[iii)] Suppose that for every $x\in \XX$ and $m\in \NN$, there is a pseudo-greedy set $A$ of $x$ such that $|A|=m$ and 
\begin{align*}
\|x-P_A(x)\|\ \leqslant\ \CC\sigma_m^{con}(x). 
\end{align*}
Then $\BB$ is $(\CC^p+c_2^{2p})^{\frac{1}{p}}$-consecutive greedy. 
\end{enumerate}
\end{lem}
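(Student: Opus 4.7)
My plan is to prove all three parts together, since they share the same structural obstacle: given $x\in\XX$, $m\in\NN$, and a greedy set $\Lambda\in G(x,m,1)$, I need to bound $\|x-P_\Lambda(x)\|$ by the corresponding error measure. Observe that $\Lambda$ is itself a pseudo-greedy set of $x$ of order $m$ (take $B=\Lambda$ and $C=\emptyset$ in the characterization preceding the lemma as differences of greedy sets), so the hypothesis would trivially apply if we could assume the pseudo-greedy set it furnishes is $\Lambda$. The difficulty is that the hypothesis only guarantees the bound for \emph{some} pseudo-greedy $A$ of order $m$, and in general $A\neq\Lambda$, so we must transfer the bound.

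For part (iii), I would use the $p$-triangle inequality
\begin{equation*}
\|x-P_\Lambda(x)\|^p \;\leq\; \|x-P_A(x)\|^p \;+\; \|P_A(x)-P_\Lambda(x)\|^p.
\end{equation*}
The first term is absorbed by the hypothesis and yields $\CC^p\sigma_m^{con}(x)^p$. For the second, the key estimate is a lower bound on $\sigma_m^{con}$: for any interval $I$ of length $m$ and any scalars $(a_n)_{n\in I}$, the residual $x-\sum_{n\in I}a_n\ee_n$ agrees with $x$ on $\NN\setminus I$, so $\|x-\sum_{n\in I}a_n\ee_n\|\geq c_2^{-1}|\ee_n^*(x)|$ for each $n\notin I$. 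Since any interval of length $m$ must miss at least one of the top $m{+}1$ coefficient indices of $x$, this gives $\sigma_m^{con}(x)\geq c_2^{-1}|c_{m+1}(x)|$, and hence $\|\ee_n^*(x)\ee_n\|\leq c_2^2\sigma_m^{con}(x)$ for every index $n$ of rank exceeding $m$. Combining this with the rank-interval description of pseudo-greedy sets and the fact that any element in $A\setminus\Lambda$ has rank $>m$ should yield $\|P_A(x)-P_\Lambda(x)\|^p\leq c_2^{2p}\sigma_m^{con}(x)^p$, completing the proof with the claimed constant.

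For parts (i) and (ii), the conclusion constant equals $\CC$ exactly, which suggests a cleaner perturbation argument rather than a triangle inequality: for each $\varepsilon>0$, construct $x_\varepsilon$ (for instance by suitably scaling the coefficients on $\Lambda$) such that any pseudo-greedy set of $x_\varepsilon$ of order $m$ consistent with the hypothesis bound for $x_\varepsilon$ must necessarily project onto the same image as $P_\Lambda$, then pass to the limit $\varepsilon\to 0$ to recover the bound for $x$. The main obstacle across all three parts is precisely this non-uniqueness of pseudo-greedy sets for a fixed vector: pseudo-greedy sets of order $m$ form ``rank intervals'' of length $m$ in the coefficient ordering, and forcing the hypothesis to select $\Lambda$ requires delicate control of $\sigma_m$, $\widetilde{\sigma}_m$, or $\sigma_m^{con}$ under the perturbation so that the bound does not degenerate into a tautology. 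I expect the hardest step to be pinning down this perturbation in parts (i) and (ii); in part (iii) the triangle-inequality approach bypasses it at the cost of the additional $c_2^{2p}$ factor.
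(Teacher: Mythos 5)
Your reduction of part (iii) has a genuine gap at its key step: the claimed estimate $\|P_A(x)-P_\Lambda(x)\|^p\leqslant c_2^{2p}\,\sigma_m^{con}(x)^p$ does not follow from the ingredients you list, and it is not true with a uniform constant by coefficientwise arguments alone. The vector $P_A(x)-P_\Lambda(x)$ is supported on $A\triangle\Lambda$, which may have up to $2m$ elements. What you actually control is each coefficient separately: indices in $A\setminus\Lambda$ have rank $>m$, so their coefficients are at most $c_2\sigma_m^{con}(x)$ in modulus (your lower bound for $\sigma^{con}_m$ is fine), and if $A$ misses top-rank indices then the hypothesis bound $\|x-P_A(x)\|\leqslant \CC\sigma_m^{con}(x)$ caps those coefficients too, since $|\ee_n^*(x)|\leqslant c_2\|x-P_A(x)\|$ for $n\notin A$. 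But at this point in the proof you have no unconditionality, quasi-greediness, or truncation quasi-greediness available, so a coefficientwise bound only yields $\|P_{A\triangle\Lambda}(x)\|\leqslant (2m)^{1/p}c_2^{2}\max(1,\CC)\,\sigma_m^{con}(x)$, a constant growing with $m$; and identities such as $P_A(x)-P_\Lambda(x)=(x-P_\Lambda(x))-(x-P_A(x))$ are circular because $\|x-P_\Lambda(x)\|$ is precisely what you are trying to bound. So the constant $(\CC^p+c_2^{2p})^{1/p}$ cannot be extracted along this route; in the paper, $c_2^{2p}$ has a completely different origin (a single correction term, see below).

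For (i) and (ii) your instinct that a perturbation is needed is right, but the concrete suggestion (rescaling the coefficients on $\Lambda$) collapses into exactly the tautology you worry about: for a large rescaling $x_t=x+t\1_{\varepsilon,\Lambda}$, the only admissible pseudo-greedy set is $\Lambda$ itself and $\widetilde{\sigma}_m(x_t)=\|x-P_\Lambda(x)\|$, so the hypothesis returns $\|x-P_\Lambda(x)\|\leqslant\CC\|x-P_\Lambda(x)\|$. The paper's device is different and is the missing idea in all three parts: first reduce by density to finitely supported $x$ whose nonzero coefficients have pairwise distinct moduli, so that $G(x,m,1)=\{\Lambda\}$; then add a spike at a \emph{fresh} coordinate, $y_k=x+k\ee_{n_0}$ with $n_0\notin\supp(x)$, and apply the hypothesis at order $m+1$. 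Since $\widetilde{\sigma}_{m+1}(y_k)\leqslant\widetilde{\sigma}_m(x)$ (resp.\ $\sigma_{m+1}(y_k)\leqslant\sigma_m(x)$), the quantities $\|y_k-P_{A_k}(y_k)\|$ stay bounded as $k\to\infty$, which forces $n_0\in A_k$ for large $k$; the pseudo-greedy structure then makes $A_k$ a genuine greedy set of $y_k$, hence $A_k=\Lambda\cup\{n_0\}$ by uniqueness, and one gets the bound with constant exactly $\CC$. The same spike argument proves (iii): given an interval $I$ of length $m$ and $y$ supported on $I$, choose $n_0\in I\setminus\Lambda$ (or $n_0=\max(\Lambda)+1$ if $I=\Lambda$), note $\sigma^{con}_{m+1}(y_k)\leqslant\|x-y\|$, and recover $x-P_\Lambda(x)=y_k-P_{A_k}(y_k)+\ee_{n_0}^*(x)\ee_{n_0}$; the extra $c_2^{2p}$ comes from this single correction term, via $|\ee_{n_0}^*(x)|\leqslant|\ee_{n_1}^*(x-y)|\leqslant c_2\|x-y\|$ with $n_1\in\Lambda\setminus I$, not from comparing the two projections $P_A$ and $P_\Lambda$.
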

\begin{proof}
The proofs of i) and ii) are essentially the same. We prove the latter. Fix $x\in \XX$. By a density argument, we may suppose that $x\in [\ee_n: n\in \NN]$ and that $|\ee_m^*(x)|\neq |\ee_n^*(x)|$ for all $m,n\in \supp(x), n\neq m$. Fix $m\in \NN$ and $A\in G(x,m,1)$. We may further assume that $x\neq P_A(x)\neq 0$. Choose $n_0\not \in \supp(x)$ and for each $k\in \NN$, let $y_k:=k \ee_{n_0}+x$. By hypothesis, for each $k\in \NN$, there is a pseudo-greedy set $A_k$ of $y_k$ such that $|A_k|=m+1$ and 
$$
\|y_k-P_{A_k}(y_k)\|\ \leqslant \ \CC \widetilde{\sigma}_{m+1}(y_k)\ \leqslant\ \CC\widetilde{\sigma}_m(x)\ \leqslant\  \CC\|x\|. 
$$
Hence, the left-hand side of the above inequality is bounded as $k$ tends to infinity, so there is $k_0\in \NN$ such that $n_0\in A_{k_0}$ and $k_0>\|x\|_{\infty}$. By the definition of a pseudo-greedy set, $|e_n^*(y_k)|\leqslant \inf_{j\in A_{k_0}}|e_j^*(x)|$ for all $n\notin A_{k_0}$. Hence, $A_{k_0}\in G(y_{k_0}, m+1, 1)$. Since $G(x,m,1)=\{A\}$, we have $G(y_{k_0}, m+1, 1)=\{A\cup \{n_0\}\}$, which implies that $\|x-P_A(x)\|=\|y_k-P_{A_k}(y_k)\|$. This completes our proof.

We prove iii). Fix $x\in \XX$, $m\in \NN$ and $A\in G(x,m,1)$. As before, we assume that
$$
x\ \in\ [\ee_n: n\in \NN];\quad |\ee_i^*(x)|\ \neq\ |\ee_j^*(x)|\;\forall i\neq j; \quad x\ \neq\ P_A(x) \ \neq\ 0.
$$
Given $I\in \II^{(m)}$ and $y\in [\ee_n: n\in I]$, if $I\not=A$ choose $n_0\in I\setminus A$ and $n_1\in A\setminus I$. For each $k$, let $y_k:=x+k \ee_{n_0}$. As in the proof of ii), for sufficiently large $k$, the only pseudo-greedy set of $y_k$ of cardinality $m+1$ that meets the condition of the statement is $A_0:=A\cup \{n_0\}\in G(y_k,m+1,1)$. Hence, 
\begin{align*}
\|x-P_A(x)\|^p&\ =\ \|y_k-P_{A_0}(y_k)+\ee_{n_0}^*(x)\ee_{n_0}\|^p\\
&\ \leqslant\ \left(\CC\sigma_{m+1}^{con}(y_k)\right)^{p}+|\ee_{n_0}^*(x)|^p\|\ee_{n_0}\|^p\\
&\ \leqslant\ \CC^p\|x-y\|^p+|\ee_{n_1}^*(x-y)|^p\|\ee_{n_0}\|^p\\
&\ \leqslant\ (\CC^p+c_2^{2p})\|x-y\|^p. 
\end{align*}
On the other hand, if $I=A$, let $n_0:=\max(A)+1$ and, for each $k$, let $y_k:=x+k\ee_{n_0}$. As before, for sufficiently large $k$, the only pseudo-greedy set of $y_k$ of cardinality $m+1$ that meets the condition of the statement is $A_0:=A\cup \{n_0\}\in G(y_k,m+1,1)$. Hence, 
\begin{align*}
\|x-P_A(x)\|^p&\ =\ \|y_k-P_{A_0}(y_k)+\ee_{n_0}^*(x)\ee_{n_0}\|^p\\
&\ \leqslant\ \left(\CC\sigma_{m+1}^{con}(y_k)\right)^{p}+|\ee_{n_0}^*(x-y)|^p\|\ee_{n_0}\|^p\\
&\ \leqslant\ (\CC^p+c_2^{2p})\|x-y\|^p. 
\end{align*}
This completes our proof.
\end{proof}

\bigskip
	\noindent \textbf{Funding:} The first author was partially supported by CONICET PIP 0483 and ANPCyT PICT-2018-04104, and has received funding from the European Union’s Horizon 2020 research and innovation programme under the Marie Skłodowska-Curie grant agreement No 777822. The second author was partially supported by the Grant PID2019-105599GB-I00 (Agencia Estatal de Investigación, Spain).


\begin{thebibliography}{9999999999}


\bibitem[AA16]{AA2016} F. Albiac and J.L. Ansorena, Characterization of 1-quasi-greedy bases, \textit{J. Approx. Theory} \textbf{201} (2016), 7--12. 

\bibitem[AA17]{AA2017} F. Albiac and J.L. Ansorena, Characterization of 1-almost-greedy bases, \textit{Rev. Mat. Complut.} \textbf{30} (2017), 13--24.

\bibitem[AAB22]{AAB2022} F. Albiac, J.L. Ansorena, and M. Berasategui, Elton's unconditionality as a threshold-free form of greediness, preprint (2022). Available at:  Available at:  \url{https://arxiv.org/abs/2209.03445}.

\bibitem[AABBL21]{AABBL2021}  F. Albiac, J.L. Ansorena, M. Berasategui, P.M. Berná, and S. Lassalle, Bidemocratic bases and their connections with other greedy-type bases, \textit{Constr. Approx.} (2022). Available at: Available at: \url{https://doi.org/10.1007/s00365-022-09607-z}.


\bibitem[AAB21]{AAB2021} F. Albiac, J.L. Ansorena, and P.M. Berná, New parameters and Lebesgue-type estimates in greedy approximation, \textit{Forum Math.} \textbf{10} (2022), 1--39. Available at: \url{doi:10.1017/fms.2022.102}.


\bibitem[AABW21]{AABW2021} F. Albiac, J.L. Ansorena, P.M. Berná, and P. Wojtaszczyk , Greedy approximation for biorthogonal systems in quasi-Banach spaces, \textit{Dissertationes Math.} \textbf{560} (2021), 1--88. 

\bibitem[AK16]{AK} F. Albiac and N. Kalton, \textit{Topics in Banach Space Theory}, second edition, ISBN 978-3-319-31555-3 (2016).


\bibitem[AW06]{AW2006} F. Albiac and P. Wojtaszscyk, Characterization of 1-greedy bases, \textit{J. Approx. Theory} \textbf{201} (2006),  65--86. 


\bibitem[B19]{B2019} P. Bern\'{a}, Equivalence between almost greedy and semi-greedy bases, \textit{J. Appl. Math. Anal. Appl.} \textbf{470}  (2019), 218--225.

\bibitem[BL21]{BL2021} M. Berasategui and S. Lassalle, Weak greedy algorithms and the equivalence between semi-greedy and almost greedy Markushevich bases, to appear in \textit{J. Fourier Anal. Appl.} (2022). Available at: Available at: \url{https://arxiv.org/abs/2004.06849}.

\bibitem[BBC22]{BBC} M. Berasategui, P. M. Bern\'{a}, and H. V. Chu, Extensions and characterizations of some greedy-type bases, preprint (2022). Available at:  \url{https://arxiv.org/abs/2207.10136}.


\bibitem[BB17]{BB2017} P. Bern\'{a} and \'{O}. Blasco, Characterization of greedy bases in Banach spaces, \textit{J. Approx. Theory.} \textbf{215} (2017), 28--39. 

\bibitem[BBG17]{BBG2017} P. M. Bern\'{a}, O. Blasco, and G. Garrig\'os, Lebesgue inequalities for greedy algorithm in general bases, \textit{Rev. Mat. Complut.} \textbf{30} (2017), 369--392.


\bibitem[BC22]{BC} P. M. Bern\'{a} and H. V. Chu, On some characterizations of greedy-type bases, \textit{Expo. Math.} \textbf{40} (2022), 1135--1158.



\bibitem[BDKOW19]{BDKOW2019} P. Bern\'{a}, S. J. Dilworth, D. Kutzarova, T. Oikhberg, and B. Wallis, The weighted Property (A) and the greedy algorithm, \textit{J. Approx. Theory}, \textbf{248} (2019), 1--18. 

\bibitem[C22]{C3} H. V. Chu, Variations of Property (A) constants and Lebesgue-type inequalities for the weak thresholding greedy algorithms, \textit{J. Approx. Theorey}, \textbf{285} (2023), 1--28. 


\bibitem[DKK03]{DKK2003} S. J. Dilworth, N. J. Kalton, and D. Kutzarova, On the existence of almost greedy bases in Banach spaces, \textit{Studia Math.}  \textbf{159} (2003), 67--101.

\bibitem[DKKT03]{DKKT2003} S. J. Dilworth, N. J. Kalton, D. Kutzarova, and V. N. Temlyakov, The thresholding greedy algorithm, greedy bases, and duality, \textit{Constr. Approx.} \textbf{19} (2003), 575--597.

\bibitem[DK19]{DK2019} S. J. Dilworth and D. Khurana, Characterizations of almost greedy and partially greedy bases, \textit{Jaen J. Approx.} \textbf{11} (2019), 115--137.

\bibitem[DKOSZ14]{DKOSZ} S. J. Dilworth, D. Kutzarova, E. Odell, Th. Schlumprecht, and A. Zs\'{a}k, Renorming spaces with greedy bases, \textit{J. Approx. Theory} \textbf{188} (2014), 39--56.


\bibitem[E78]{E1978} J. H. Elton, \textit{Weakly null normalized sequences in Banach spaces}, ProQuest LLC, Ann Arbor, MI, 1978. Thesis (Ph.D.)--Yale University.

\bibitem[KT99]{KT1} S. V. Konyagin and V. N. Temlyakov, A remark on greedy approximation in Banach spaces, \textit{East J. Approx.} \textbf{5} (1999), 365--379.

\bibitem[KT02]{KT2} S. V. Konyagin and V. N. Temlyakov, Greedy approximation with regard to bases and general minimal systems, \textit{Serdica Math. J.} \textbf{28} (2002), 305--328.

\bibitem[T98]{T98} V. N. Temlyakov, The best $m$-term approximation and greedy algorithms, \textit{Adv. Comput. Math.} \textbf{8}, 249--265.

\bibitem[T08]{T2008} V. N. Temlyakov, Greedy Approximation, \textit{Acta Numer.} \textbf{17} (2008), 235--409. 

\bibitem[T11]{T2011} V. N. Temlyakov, \textit{Greedy Approximation}, Vol 20, Cambridge University Press, 2011.


\bibitem[W00]{W2000} P. Wojtaszczyk, Greedy algorithm for general biorthogonal systems, \textit{J.  Approx. Theory} \textbf{107} (2000), 293--314.



\end{thebibliography}
\end{document}